\newcommand{\R}{\mathbb R}
\newcommand{\ep}{\varepsilon}
\newtheorem{theorem}{Theorem}[section]
\newtheorem{lemma}{Lemma}[section]
\theoremstyle{remark}
\newtheorem{remark}{Remark}[section]
\newtheorem{proposition}{Proposition}[section]
\newtheorem{example}{Example}[section]
\numberwithin{equation}{section}
\begin{document}
\title{\bf Multiple solutions to a Robin problem with indefinite weight and asymmetric reaction}
\author{
\bf Giuseppina D'Agu\`{\i}\\
\small{DICIEAMA,
University of Messina,}\\
\small{98166 Messina, Italy}\\
\small{\it E-mail: dagui@unime.it}\\
\mbox{}\\
\bf Salvatore A. Marano\thanks{Corresponding author}\\
\small{Department of Mathematics and Computer Sciences,
University of Catania,}\\
\small{Viale A. Doria 6, 95125 Catania, Italy}\\
\small{\it E-mail: marano@dmi.unict.it}\\
\mbox{}\\
\bf Nikolaos S. Papageorgiou\\
\small{Department of Mathematics,
National Technical University of Athens,}\\
\small{Zografou Campus, Athens 15780, Greece}\\
\small{\it E-mail:npapg@math.ntua.gr}
}
\date{}
\maketitle
\begin{abstract}
The existence of two nontrivial smooth solutions to a semilinear Robin problem with indefinite unbounded potential and asymmetric nonlinearity $f$ is established. Both crossing and resonance are allowed. A third nonzero solution exists provided $f$ is $C^1$. Proofs exploit variational methods, truncation techniques, and Morse theory.
\end{abstract}
\vspace{2ex}

\noindent {\bf Keywords:} Robin problem, indefinite unbounded potential, resonance, asymmetric crossing nonlinearity, multiple solutions
\vspace{2ex}

\noindent {\bf AMS Subject Classification:} 35J20, 35J60, 58E05
\section{Introduction} \label{S1}
Let $\Omega$ be a bounded domain in $\R^N$ having a smooth boundary $\partial\Omega$, let $a\in L^s(\Omega)$ for appropriate $s\geq 1$, and let $f:\Omega\times\R\to\R$ be a Carath\'eodory function. The semilinear elliptic equation with indefinite unbounded potential
$$-\Delta u+a(x)u=f(x,u)\quad\mbox{in}\quad\Omega$$
has by now been widely investigated under Dirichlet or Neumann boundary conditions; see \cite{KyPa,PaPa} and \cite{PaRa1,PaSm}, respectively, besides the references given there. If $a(x)\equiv 0$ then the case of asymmetric nonlinearities $f$, meaning that $t\mapsto f(x,t)t^{-1}$ crosses at least the principal eigenvalue of the relevant differential operator as $t$ goes from $-\infty$ to $+\infty$, was also studied; cf. \cite{dePa,dePaPr,ReRu}. From a technical point of view, the Fu{\v c}ik spectrum is often exploited \cite{ACCG}, which entails that the limits $\displaystyle{\lim_{t\to\pm\infty}}f(x,t)t^{-1}$ do exist.

This work treats equations having both difficulties under Robin boundary conditions. Hence, for $a(x)$ bounded only from above, $s>N$, and $\beta\in W^{1,\infty}(\partial\Omega)$ nonnegative, we consider the problem
\begin{equation}\label{prob}
\left\{
\begin{array}{ll}
-\Delta u+a(x)u=f(x,u) & \mbox{in }\Omega,\\
\displaystyle{\frac{\partial u}{\partial n}}+\beta(x) u=0 & \mbox{on } \partial\Omega,\\
\end{array}
\right.
\end{equation}
where $\frac{\partial u}{\partial n}:=\nabla u\cdot n$, with $n(x)$ being the outward unit normal vector to $\partial\Omega$ at its point $x$. As usual,  $u\in H^1(\Omega)$ is called a (weak) solution of \eqref{prob} provided
$$\int_\Omega\nabla u\cdot\nabla v\, dx+\int_{\partial\Omega}\beta uv\, d\sigma+\int_\Omega auv\, dx =\int_\Omega f(x,u)v\, dx\quad\forall\, v\in H^1(\Omega).$$
Our assumptions on the reaction $f$ at infinity are essentially the following.
\begin{itemize}
\item There exists $k\geq 2$ such that $\displaystyle{\hat\lambda_k\leq\liminf_{t\to-\infty}\frac{f(x,t)}{t}\leq \limsup_{t\to-\infty}\frac{f(x,t)}{t}\leq\hat\lambda_{k+1}}$,
\item $\displaystyle{\limsup_{t\to+\infty}\frac{f(x,t)}{t}\leq\hat\lambda_1}$, and $\displaystyle{\lim_{t\to+\infty}\left[f(x,t)t-2\int_0^t f(x,\tau)d\tau\right]=+\infty}$
\end{itemize}
uniformly in $x\in\Omega$. Here, $\hat\lambda_n$ denotes the $n^{\rm th}$-eigenvalue of the problem 
\begin{equation}\label{eigen}
 -\Delta u+a(x)u=\lambda u\quad\mbox{in}\quad\Omega,\quad\frac{\partial u}{\partial n}+\beta(x) u=0\quad\mbox{on}\quad \partial\Omega.
\end{equation}
It should be noted that a possible interaction (resonance)  with eigenvalues is allowed. If an additional condition on the behavior of $t\mapsto f(x,t)t^{-1}$ as $t\to 0$ holds then we obtain at least two nontrivial $C^1$-solutions to \eqref{prob}, one of which is positive; see Theorems \ref{thmone}--\ref{thmthree} for precise statements. As an example, Theorem \ref{thmone} applies when
$$f(x,t):=\left\{
\begin{array}{ll}
bt & \mbox{if } t\leq 1,\\
\hat\lambda_1t-\sqrt{t}+(b-\hat\lambda_1+1)t^{-1} & \mbox{otherwise},\\
\end{array}
\right.$$
with $\hat\lambda_k\leq b\leq\hat\lambda_{k+1}$ and $k>2$ large enough, or
$$f(x,t):=\left\{
\begin{array}{ll}
b(t+1)-c & \mbox{if } t<-1,\\
ct & \mbox{if } |t|\leq 1,\\
\hat\lambda_1(t-1)+c & \mbox{otherwise},\\
\end{array}
\right.$$
where $c>\hat\lambda_2$. Let us point out that, unlike previous results, the nonlinearities treated by Theorem \ref{thmthree} turn out to be concave near zero. Finally, Theorem \ref{thmfour} gives a third nontrivial $C^1$-solution once
$$f(x,\cdot)\in C^1(\R)\quad\mbox{and}\quad\sup_{t\in\R}|f'_t(\cdot,t)|\in L^\infty(\Omega).$$
Our arguments are patterned after those of \cite{MaPaRL} (cf. also \cite{MaMoPa}) where, however, the Dirichlet problem is investigated, $a(x)\equiv 0$, but the $p$-Laplace operator appears. Moreover, the hypotheses on $f$ made there do not permit resonance at any eigenvalue. The approach we adopt exploits variational and truncation techniques, as well as results from Morse theory. Regularity of solutions basically arises from \cite{W}.
\section{Preliminaries}\label{S2}
Let $(X,\Vert\cdot\Vert)$ be a real Banach space. Given a set $V\subseteq X$, write $\overline{V}$ for the closure of $V$, $\partial V$ for the boundary of $V$, and ${\rm int}(V)$ for the interior of $V$. If $x\in X$ and $\delta>0$ then $B_\delta(x):=\{ z\in X:\;\Vert z-x\Vert<\delta\}$ while $B_\delta:=B_\delta(0)$. The symbol $(X^*,\Vert\cdot \Vert_{X^*})$ denotes the dual space of $X$, $\langle\cdot,\cdot\rangle$ indicates the duality pairing between $X$ and $X^*$, while $x_n\to x$ (respectively, $x_n\rightharpoonup x$) in $X$ means `the sequence $\{x_n\}$ converges strongly (respectively, weakly) in $X$'. We say that $\Phi:X\to\mathbb{R}$ is coercive iff
$$\lim_{\Vert x\Vert\to+\infty}\Phi(x)=+\infty.$$
$\Phi$ is called weakly sequentially lower semi-continuous when $x_n\rightharpoonup x$ in $X$ implies
$$\Phi(x)\leq \displaystyle{\liminf_{n\to\infty}}\Phi(x_n).$$
Let $\Phi\in C^1(X)$. The classical Cerami compactness condition for $\Phi$ reads as follows.
\begin{itemize}
\item[$({\rm C})$] {\it Every sequence $\{x_n\}\subseteq X$ such that $\{\Phi(x_n)\}$ is bounded and
$$\lim_{n\to+\infty}(1+\Vert x_n\Vert)\Vert \Phi'(x_n) \Vert_{X^*}=0$$
has a convergent subsequence.}
\end{itemize}
Define, provided $c\in\mathbb{R}$,
$$\Phi_c:=\{x\in X:\; \Phi(x)\geq c\},\quad \Phi^c:=\{x\in X:\; \Phi(x)\leq c\},$$
$$K(\Phi):=\{x\in X:\,\Phi'(x)=0\},\quad K_c(\Phi):=K(\Phi)\cap\Phi^{-1}(c).$$
Given a topological pair $(A,B)$ fulfilling $B\subset A\subseteq X$, the symbol $H_q(A,B)$, $q\in\mathbb{N}_0$, indicates the
${\rm q}^{\rm th}$-relative singular homology group of $(A,B)$ with integer coefficients. If $x_0\in K_c(\Phi)$ is an isolated point of $K(\Phi)$ then
$$C_q(\Phi,x_0):=H_q(\Phi^c\cap V,\Phi^c\cap V\setminus\{x_0\}),\quad q\in\mathbb{N}_0,$$
are the critical groups of $\Phi$ at $x_0$. Here, $V$ stands for any neighborhood of $x_0$ such that $K(\Phi)\cap\Phi^c\cap V=\{x_0\}$. By excision, this definition does not depend on the choice of $V$. Suppose $\Phi$ satisfies Condition $({\rm C})$, $\Phi|_{K(\Phi)}$ is bounded below, and $c<\displaystyle{\inf_{x\in K(\Phi)}}\Phi(x)$. Put
$$C_q(\Phi,\infty):=H_q(X,\Phi^c),\quad q\in\mathbb{N}_0.$$
The Second Deformation Lemma \cite[Theorem 5.1.33]{GaPaNA} implies that this definition does not depend on the choice of $c$. If $K(\Phi)$ is finite, then setting
$$M(t,x):=\sum_{q=0}^{+\infty}{\rm rank}\, C_q(\Phi,x)t^q\, ,\quad
P(t,\infty):=\sum_{q=0}^{+\infty}{\rm rank}\, C_q(\Phi,\infty)t^q\quad
\forall\, (t,x)\in\mathbb{R}\times K(\Phi)\, ,$$
the Morse relation below holds:
\begin{equation}\label{morse}
\sum_{x\in K(\Phi)}M(t,x)=P(t,\infty)+(1+t)Q(t)\, ,
\end{equation}
where $Q(t)$ denotes a formal series with nonnegative integer coefficients; see for instance \cite[Theorem 6.62]{MoMoPa}. 
\begin{proposition}\label{abstract}
Let $h\in C^1([0,1]\times X)$. Assume that:
\begin{itemize}
\item[$({\rm i}_1)$] $h$ maps bounded sets into bounded sets.
\item[$({\rm i}_2)$] $h(0,\cdot)$ and $h(1,\cdot)$ satisfy Condition (C).
\item[$({\rm i}_3)$] $t\mapsto h'_t(t,x)$ is locally Lipschitz continuous. Moreover, there exist $\alpha>0$, $p\in(1,+\infty)$ such that $|h'_t(t,x)|\leq\alpha\Vert x\Vert^p$ in $[0,1]\times X$.
\item[$({\rm i}_4)$] $x\mapsto h'_x(t,x)$ is locally Lipschitz continuous and with appropriate $a,\delta>0$ one has
$$h(t,x)\leq a\;\implies\; (1+\Vert x\Vert)|h'_x(t,x)|\geq\delta\Vert x\Vert^p.$$
\end{itemize}
Then $C_q(h(0,\cdot),\infty)=C_q(h(1,\cdot),\infty)$ for all $q\in\mathbb{N}_0$.
\end{proposition}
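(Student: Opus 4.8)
The plan is to prove the homotopy invariance of critical groups at infinity by constructing, for each suitably small sublevel parameter $c$, a deformation retraction of the pair $(X,h(1,\cdot)^c)$ onto $(X,h(0,\cdot)^c)$ that is compatible with the homotopy parameter $t$. Concretely, I would work with the map $\Phi_t:=h(t,\cdot)$ for $t\in[0,1]$ and follow the negative pseudogradient flow while simultaneously letting $t$ vary; this is the standard strategy (going back to Chang--Ghoussoub and reworked in the Robin setting in related papers) for showing that families of functionals with uniformly controlled behavior have the same homotopy type of sublevel sets near $-\infty$.

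First I would fix a real number $a$ as in $({\rm i}_4)$ and argue that, because of $({\rm i}_3)$--$({\rm i}_4)$, the quantity $\Phi_t(x)$ decreases along $t$ at a rate controlled by $\|x\|^p$, whereas on the set $\{\Phi_t(x)\le a\}$ the gradient $\|\Phi_t'(x)\|_{X^*}$ is bounded below by $\delta\|x\|^p/(1+\|x\|)$; dividing these two estimates shows that a small motion in $t$ can be compensated by a bounded motion along a pseudogradient vector field without leaving the relevant sublevel set. Next I would check, using $({\rm i}_1)$ and $({\rm i}_2)$, that one may choose $c<a$ below $\inf_{K(\Phi_0)\cup K(\Phi_1)}$ (both infima are finite by Condition (C)) and small enough that the sublevel sets $\Phi_0^c$ and $\Phi_1^c$ contain no critical points and that the deformation constructed below stays inside $\{\Phi_t\le a\}$. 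Then I would write down the flow: a locally Lipschitz vector field $V(t,x)$ on $[0,1]\times X$, built from a pseudogradient of $\Phi_t$ (the local Lipschitz continuity in $({\rm i}_3)$--$({\rm i}_4)$ is exactly what is needed for this construction and for the ODE to have unique solutions), and solve $\dot\sigma = -\lambda(t,\sigma)V(t,\sigma)$, $\sigma(0)=x$, letting $t$ run from $0$ to $1$ with the cutoff $\lambda$ chosen so that the trajectory that starts in $\Phi_1^c$ ends in $\Phi_0^c$ and points already in $\Phi_0^c$ stay put. This yields the desired homotopy equivalence of pairs, hence $H_q(X,\Phi_1^c)\cong H_q(X,\Phi_0^c)$, which is the claim by the $c$-independence of critical groups at infinity recalled before the proposition (via the Second Deformation Lemma, using again that both $\Phi_0$ and $\Phi_1$ satisfy (C) and have $\Phi|_{K}$ bounded below).

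The main obstacle, and the place where the hypotheses must be used with care, is ensuring that the combined $(t,x)$-flow does not escape the region $\{\Phi_\bullet\le a\}$ where the lower gradient bound holds: one needs the gain of altitude coming from increasing $t$ (bounded by $\alpha\|x\|^p$ per unit time, by $({\rm i}_3)$) to be dominated by the loss of altitude along $-V$ (of order $\|\Phi_t'(x)\|_{X^*}\ge \delta\|x\|^p/(1+\|x\|)$, by $({\rm i}_4)$, after normalising the pseudogradient), which forces a careful choice of the reparametrisation/cutoff $\lambda(t,x)\sim (1+\|x\|)$ and a Gronwall-type argument to keep $\Phi_t(\sigma(t))\le a$ along the whole trajectory; the factor $(1+\|x\|)$ in both $({\rm i}_4)$ and the normalisation is precisely what makes the two rates comparable uniformly in $\|x\|$. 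A secondary technical point is patching the two Condition-(C) endpoints $h(0,\cdot)$ and $h(1,\cdot)$ with the intermediate maps, for which (C) is \emph{not} assumed: this is handled by confining all the analysis to sublevel sets, where no Palais--Smale-type compactness of the intermediate functionals is needed because the flow is driven purely by the quantitative gradient lower bound $({\rm i}_4)$ rather than by a deformation lemma.
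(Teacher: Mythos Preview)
The paper does not actually prove this proposition: immediately after the statement it says only that the result ``represents a slight generalization of \cite[Proposition 3.2]{LiSu}'' and omits the proof. Your sketch is the standard argument behind that reference (and its antecedents by Chang and by Corvellec--Hantoute): balance the vertical drift $|h'_t|\le\alpha\Vert x\Vert^p$ from $({\rm i}_3)$ against the horizontal gradient lower bound $(1+\Vert x\Vert)\Vert h'_x\Vert_{X^*}\ge\delta\Vert x\Vert^p$ from $({\rm i}_4)$ to build a $(t,x)$-deformation, with speed scaled by roughly $1+\Vert x\Vert$, that carries $h(1,\cdot)^c$ onto $h(0,\cdot)^c$ for $c\ll a$ while staying in $\{h(t,\cdot)\le a\}$. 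So your approach is correct and is essentially what the cited result does.

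One small correction: the fact that $\Phi_0|_{K(\Phi_0)}$ and $\Phi_1|_{K(\Phi_1)}$ are bounded below is not a consequence of Condition (C) by itself, but follows directly from $({\rm i}_4)$, which forbids any critical point in $\{h(t,\cdot)\le a\}$; hence every critical value exceeds $a$, and the critical groups at infinity are well defined. You should also note, when carrying out the Gronwall step, that $({\rm i}_1)$ guarantees $\{h(t,\cdot)\le c\}\subseteq\{\Vert x\Vert\ge R\}$ for $c$ negative enough, so the comparison $\Vert x\Vert^{p}/(1+\Vert x\Vert)\gtrsim 1$ needed for the drift estimate is available along the whole flow.
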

This result represents a slight generalization of \cite[Proposition 3.2]{LiSu}. Therefore, we omit the proof.

Now, let $X$ be a Hilbert space, let $x\in K(\Phi)$, and let $\Phi$ be $C^2$ in a neighborhood of $x$. If $\Phi''(x)$ turns out to be invertible, then $x$ is called non-degenerate. The Morse index $d$ of $x$ is the supremum of the dimensions of the vector subspaces of $X$ on which $\Phi''(x)$ turns out to be negative definite. When $x$ is non-degenerate and with Morse index $d$ one has
\begin{equation}\label{kd}
C_q(\Phi,x)=\delta_{q,d}\mathbb{Z}\, ,\quad q\in\mathbb{N}_0\, .
\end{equation}
The monographs \cite{MW, MoMoPa} represent general references on the subject.

Throughout this paper, $\Omega$ denotes a bounded domain of the real euclidean $N$-space $(\mathbb{R}^N,|\cdot|)$ whose boundary is $C^2$. On $\partial\Omega$ we will employ the $(N-1)$-dimensional Hausdorff measure $\sigma$. The Trace Theorem \cite[Theorem 2.79]{CLM} ensures that there exists a unique completely continuous linear operator $\gamma:H^1(\Omega)\to L^2(\partial\Omega)$ such that
$$\gamma(u)=u|_{\partial\Omega}\quad\forall\, u\in C^1(\overline{\Omega}),\quad{\ker}(\gamma)=H^1_0(\Omega).$$
To simplify notation, we let $u$ in place of $\gamma(u)$ when no confusion can arise. The symbol $\Vert\cdot\Vert_q$ with $q\geq 1$ indicates the usual norm of $L^q(\Omega)$ and
$$\Vert u\Vert:=\left(\Vert\nabla u\Vert_2^2+\Vert u\Vert_2^2\right)^{1/2},\quad u\in H^1(\Omega),$$
$$C_+:=\{u\in C^0(\overline{\Omega}): u(x)\geq 0\quad\forall\, x\in\overline{\Omega}\}.$$
Write $2^*$ for the critical exponent of the Sobolev embedding $H^1(\Omega)\subseteq L^q(\Omega)$. Recall that $2^*=2N/(N-2)$ if $2<N$, $2^*=+\infty$ otherwise, and the embedding is compact whenever $1\leq q<2^*$. Moreover,
$${\rm int}(C_+)=\{ u\in C_+: u(x)>0\quad\forall\, x\in\overline{\Omega}\}.$$
Given $t\in\mathbb{R}$, $u,v:\Omega\to\mathbb{R}$, and $f:\Omega\times\mathbb{R}\to\mathbb{R}$, define
$$t^\pm:=\max\{\pm t,0\},\quad u^\pm (x):=u(x)^\pm ,\quad N_f(u)(x):=f(x,u(x)).$$
$u\leq v$ signifies $u(x)\leq v(x)$ for almost every $x\in\Omega$. The meaning of $u<v$ etc. is analogous.
\begin{remark}\label{equiv}
If $u\in H^1(\Omega)$, $w\in L^2(\Omega)$, and $\beta\in L^\infty(\partial\Omega)$ then the condition
$$\int_\Omega\nabla u(x)\cdot\nabla v(x)dx+\int_{\partial\Omega}\beta(x)u(x)v(x)d\sigma=\int_\Omega w(x)v(x)dx,\quad v\in H^1(\Omega),$$
is equivalent to  
$$-\Delta u=w\quad\mbox{a.e. in}\quad\Omega,\quad\frac{\partial u}{\partial n}+\beta(x)u=0\quad\mbox{on}\quad
\partial\Omega.$$
This easily comes out from the nonlinear Green's identity \cite[Theorem 2.4.54]{GaPaNA}; see for instance the proof of \cite[Proposition 3]{PaRa}.
\end{remark}
We shall employ some facts about the spectrum of the operator $u\mapsto-\Delta u+a(x)u$ in $H^1(\Omega)$ with homogeneous Robin boundary conditions. So, consider the eigenvalue problem \eqref{eigen} where, from now on,
\begin{equation}\label{abeta}
a\in L^s(\Omega)\;\mbox{ for some $s>N$, }\; a^+\in L^\infty(\Omega),\quad\beta\in W^{1,\infty}(\partial\Omega),\;\mbox{ and }\;\beta\geq 0.
\end{equation}
Define
\begin{equation}\label{defE}
{\cal E} (u):=\Vert\nabla u\Vert_2^2+\int_\Omega a(x)u(x)^2dx+\int_{\partial\Omega}\beta(x)u(x)^2d\sigma\quad\forall\, u\in H^1(\Omega).
\end{equation}
\begin{lemma}\label{hatab}
There exist $\hat a, \hat b>0$ such that
$${\cal E}(u)+\hat a\Vert u\Vert_2^2\geq \hat b\Vert u\Vert^2\quad\forall\, u\in H^1(\Omega).$$
\end{lemma}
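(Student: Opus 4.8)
The plan is to establish the inequality by contradiction, exploiting the compactness of the embedding $H^1(\Omega)\subseteq L^2(\Omega)$ together with the compactness of the trace operator $\gamma$. Suppose no such constants exist. Then, taking $\hat a=\hat b=n$ for each $n\in\mathbb N$, one finds $u_n\in H^1(\Omega)$ with ${\cal E}(u_n)+n\Vert u_n\Vert_2^2<n\Vert u_n\Vert^2$; normalizing, we may assume $\Vert u_n\Vert=1$ for all $n$. Up to a subsequence, $u_n\rightharpoonup u$ in $H^1(\Omega)$, hence $u_n\to u$ strongly in $L^2(\Omega)$ and $\gamma(u_n)\to\gamma(u)$ in $L^2(\partial\Omega)$ by the quoted compactness results. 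Since $a^+\in L^\infty(\Omega)$ one also controls $\int_\Omega a^+u_n^2\,dx\to\int_\Omega a^+u^2\,dx$, while the negative part $a^-\in L^s(\Omega)$ with $s>N$ is handled via H\"older and the Sobolev embedding $H^1(\Omega)\subseteq L^{2s'}(\Omega)$ (legitimate because $2s'<2^*$ when $s>N$), giving $\int_\Omega a^-u_n^2\,dx\to\int_\Omega a^-u^2\,dx$ along the subsequence by the strong $L^{2s'}$-convergence. Likewise $\int_{\partial\Omega}\beta u_n^2\,d\sigma\to\int_{\partial\Omega}\beta u^2\,d\sigma$ since $\beta\in L^\infty(\partial\Omega)$.

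Next I would extract the consequences of the defining inequality. Dividing ${\cal E}(u_n)+n\Vert u_n\Vert_2^2<n$ by $n$ and letting $n\to\infty$ forces $\Vert u_n\Vert_2^2\to 0$, whence $u=0$ in $L^2(\Omega)$ and therefore, by the convergences collected above, ${\cal E}(u_n)\to{\cal E}(0)=0$; in particular $\int_\Omega au_n^2\,dx\to 0$ and $\int_{\partial\Omega}\beta u_n^2\,d\sigma\to 0$. From the structure of ${\cal E}$ this yields $\Vert\nabla u_n\Vert_2^2\to 0$. Combined with $\Vert u_n\Vert_2^2\to 0$ we get $\Vert u_n\Vert^2=\Vert\nabla u_n\Vert_2^2+\Vert u_n\Vert_2^2\to 0$, contradicting $\Vert u_n\Vert=1$. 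Hence the asserted $\hat a,\hat b$ exist.

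The main obstacle is the treatment of the potential term $\int_\Omega a u_n^2\,dx$, since $a$ is merely in $L^s(\Omega)$ and not bounded below; the key point is that $s>N$ guarantees $2s'<2^*$, so $u\mapsto\int_\Omega a^- u^2\,dx$ is weakly continuous on $H^1(\Omega)$ via the compact embedding into $L^{2s'}(\Omega)$, and this is precisely what makes the limit pass through. Everything else — the trace term and the $a^+$ term — is routine given the completely continuous trace operator and $a^+\in L^\infty(\Omega)$. An alternative, non-contradiction route would be to write ${\cal E}(u)\ge\Vert\nabla u\Vert_2^2-\Vert a^-\Vert_s\Vert u\Vert_{2s'}^2$ and interpolate $\Vert u\Vert_{2s'}^2$ between $\Vert u\Vert_2^2$ and $\Vert\nabla u\Vert_2^2+\Vert u\Vert_2^2$ using the Gagliardo--Nirenberg--Sobolev inequality, absorbing the gradient part with a small constant; but the compactness argument is cleaner and avoids tracking explicit interpolation exponents.
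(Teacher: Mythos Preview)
There is a genuine gap: your negation of the statement is too weak. Taking $\hat a=\hat b=n$ yields, after normalizing to $\Vert u_n\Vert=1$,
\[
{\cal E}(u_n)+n\Vert u_n\Vert_2^2<n,
\]
and this does \emph{not} force $\Vert u_n\Vert_2\to 0$. Indeed, any fixed $u\in H^1(\Omega)$ with $\Vert u\Vert=1$ and $\Vert u\Vert_2<1$ satisfies the inequality for all large $n$, since $n(1-\Vert u\Vert_2^2)\to+\infty$ while ${\cal E}(u)$ is a fixed number. So nothing prevents, e.g., $u_n\equiv u$ with $\Vert u\Vert_2=\tfrac12$ along the whole sequence, and the argument collapses at the very step where you claim $\Vert u_n\Vert_2^2\to 0$.

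The paper negates the statement with $\hat a=n$ and $\hat b=1/n$, obtaining ${\cal E}(v_n)+n\Vert v_n\Vert_2^2<1/n$. Since ${\cal E}$ is bounded below on the unit sphere of $H^1(\Omega)$ (by exactly the H\"older and trace estimates you wrote down), this forces $n\Vert v_n\Vert_2^2\to 0$, hence $v=0$; and it simultaneously gives $\limsup_n{\cal E}(v_n)\le 0$, which together with the weak lower semicontinuity $0={\cal E}(0)\le\liminf_n{\cal E}(v_n)$ yields ${\cal E}(v_n)\to 0$ and then $\Vert\nabla v_n\Vert_2\to 0$. This last point also repairs a second, smaller issue in your write-up: the claim ``${\cal E}(u_n)\to{\cal E}(0)$ by the convergences collected above'' is not justified by those convergences alone, because they say nothing about $\Vert\nabla u_n\Vert_2$; the upper bound on ${\cal E}(u_n)$ coming from the contradiction inequality is essential, and with $\hat b=n$ that upper bound is useless. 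Replace $\hat b=n$ by $\hat b=1/n$ (or any positive sequence tending to $0$) and the remainder of your argument goes through.
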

\begin{proof}
If the conclusion was false, we could construct a sequence $\{u_n\}\subseteq H^1(\Omega)$ fulfilling
\begin{equation}\label{limone}
{\cal E}(u_n)+n\Vert u_n\Vert_2^2<\frac{1}{n}\Vert u_n\Vert^2,\quad n\in\mathbb{N}.
\end{equation}
Set $v_n:=\Vert u_n\Vert^{-1}u_n$. Since 
\begin{equation}\label{propone}
\Vert v_n\Vert=1\quad\forall\, n\in\mathbb{N},
\end{equation}
we may assume that
\begin{equation}\label{conv}
v_n\rightharpoonup v\;\mbox{ in }\; H^1(\Omega),\quad v_n\to v\;\mbox{ in }\; L^2(\Omega),\quad\mbox{and}\quad v_n\to v\;\mbox{ in }\; L^2(\partial\Omega).
\end{equation}
Therefore,
\begin{equation}\label{limtwo}
{\cal E}(v)\leq\liminf_{n\to+\infty}{\cal E}(v_n).
\end{equation}
From \eqref{limone}--\eqref{limtwo} it follows $v=0$ as well as $n\Vert v_n\Vert^2_2\to 0$, which implies
\begin{equation}\label{limthree}
\lim_{n\to+\infty}\Vert v_n\Vert=0.
\end{equation}
In fact, on account of \eqref{limone},
$$0={\cal E}(0)\leq\liminf_{n\to+\infty}{\cal E}(v_n)\leq\limsup_{n\to+\infty}{\cal E}(v_n)\leq\lim_{n\to+\infty}
\left(\frac{1}{n}-n\Vert v_n\Vert^2_2\right)=0$$
and, by \eqref{conv},
$$0=\lim_{n\to+\infty}{\cal E}(v_n)=\lim_{n\to+\infty}\Vert\nabla v_n\Vert^2_2.$$
However, \eqref{limthree} contradicts \eqref{propone}.
\end{proof}
Thanks to the above lemma, letting
$$(u,v):=\int_\Omega \nabla u\cdot\nabla v dx+\int_\Omega (a(x)+\hat a)uv dx+\int_{\partial\Omega}\beta(x)uvd\sigma \quad\forall\, u,v\in H^1(\Omega)$$
produces a scalar product on $H^1(\Omega)$ equivalent to the usual one. Further, given $u\in L^2(\Omega)$, there exists a unique $\tilde u\in H^1(\Omega)$ such that
$$(\tilde u,v)=\int_\Omega u(x)v(x)dx,\quad v\in H^1(\Omega).$$ 
Let $K:L^2(\Omega)\to H^1(\Omega)$ be defined by
$$K(u):=\tilde u\quad\mbox{for every}\quad u\in L^2(\Omega)$$
 and let $i:H^1(\Omega)\to L^2(\Omega)$ be the embedding map. Obviously, $K\circ i:H^1(\Omega)\to H^1(\Omega)$ is linear, compact, self-adjoint, while
$$(K\circ i(u),v)=\int_\Omega u(x)v(x)dx\quad\forall\, u,v\in H^1(\Omega).$$
Consequently,
$$(K\circ i(u),u)=\Vert u\Vert_2^2,\quad u\in H^1(\Omega).$$
Theorem 3.1.57 in \cite{GaPaNA} ensures that $K\circ i$ possesses a decreasing sequence $\{\mu_n\}$ of positive eigenvalues such that $\mu_n\to 0$. Then 
$$\hat\lambda_n:=\frac{1}{\mu_n}-\hat a,\quad n\in\mathbb{N},$$
represent the eigenvalues of \eqref{eigen} and there exists a corresponding sequence $\{\hat u_n\}\subseteq H^1(\Omega)$ of eigenfunctions, which turns out to be an orthonormal basis of $H^1(\Omega)$. For each $n\in\mathbb{N}$, denote by $E(\hat\lambda_n)$ the eigenspace associated with $\hat\lambda_n$. It is known that:
\begin{itemize}
\item[$({\rm p}_1)$] \textit{$E(\hat\lambda_n)$ is finite dimensional.}
\item[$({\rm p}_2)$] \textit{If $u$ lies in $E(\hat\lambda_n)$ and vanishes on a set of positive Lebesgue measure, then $u=0$.}
\item[$({\rm p}_3)$] \textit{$E(\hat\lambda_n)\subseteq C^1(\overline{\Omega})$.}
\item[$({\rm p}_4)$] \textit{$H^1(\Omega)=\overline{\oplus_{n=1}^\infty E(\hat\lambda_n)}$. Moreover,
\begin{equation}\label{lone}
\hat\lambda_1=\inf\left\{\frac{{\cal E}(u)}{\Vert u\Vert_2^2}:u\in H^1(\Omega),\, u\neq 0\right\},
\end{equation}
\begin{equation*}
\hat\lambda_n=\inf\left\{\frac{{\cal E}(u)}{\Vert u\Vert_2^2}:u\in\hat H_n,\, u\neq 0\right\}
=\sup\left\{\frac{{\cal E}(u)}{\Vert u\Vert_2^2}:u\in\bar H_n,\, u\neq 0\right\},\; n\geq 2,
\end{equation*}
where $\hat H_n:=\oplus_{i=n}^\infty E(\hat\lambda_i)$ and $\bar H_n:=\oplus_{i=1}^n E(\hat\lambda_i)$.}
\item[$({\rm p}_5)$] \textit{Elements of $E(\hat\lambda_1)$ do not change sign and $\hat\lambda_1$ is simple.}
\item[$({\rm p}_6)$] \textit{There exists an $L^2$-normalized eigenfunction $\hat u_1\in{\rm int}(C_+)$ associated with $\hat\lambda_1$.} 
\item[$({\rm p}_7)$] \textit{Each $\hat\lambda_n$ with $n\geq 2$ possesses a nodal eigenfunction.}
\end{itemize}
In particular, $({\rm p}_2)$ comes out from \cite[Proposition 3]{deFG}, the regularity results of \cite[Section 5]{W} imply $({\rm p}_3)$, while $({\rm p}_5)$ is easily verified through Picone's identity \cite[p. 255]{MoMoPa} besides \eqref{lone}. The same holds true for $({\rm p}_7)$; see, e.g., \cite[Section 9.3]{MoMoPa}. Finally, Theorems 2.5.2 and 5.5.1 in \cite{PuSe} basically yield $({\rm p}_6)$.

The next characterization of $\hat\lambda_2$ will be used later. Its proof is analogous to that of \cite[Proposition 5]{PaRa}.
\begin{itemize}
\item[$({\rm p}_8)$] \textit{Write $M:=\{ u\in H^1(\Omega):\; \Vert u\Vert_2=1\}$ as well as
$$\Gamma_1:=\{\gamma\in C^0([-1,1],M):\gamma(-1)=-\hat u_1,\;\gamma(1)=\hat u_1\}.$$
Then
$$\hat\lambda_2=\inf_{\gamma\in\Gamma_1}\max_{t\in [-1,1]}{\cal E}(\gamma(t)) .$$
}
\end{itemize}
A simple argument, based on orthogonality, $({\rm p}_2)$, and $({\rm p}_4)$, gives the next result.
\begin{lemma}\label{splitting}
Let $n\in\mathbb{N}$ and let $\theta\in L^\infty(\Omega)\setminus\{\hat\lambda_n\}$ satisfy $\theta\geq\hat\lambda_n$. Then there exists a constant $\bar c>0$ such that
$${\cal E}(u)-\int_\Omega\theta(x) u(x)^2dx\leq-\bar c\Vert u\Vert^2\quad\forall\, u\in\bar H_n\, .$$
Let $n\in\mathbb{N}$ and let $\theta\in L^\infty(\Omega)\setminus\{\hat\lambda_n\}$ satisfy $\theta\leq\hat\lambda_n$. Then there exists a constant $\hat c>0$ such that
$${\cal E}(u)-\int_\Omega\theta(x) u(x)^2dx\geq\hat c\Vert u\Vert^2\quad\forall\, u\in\hat H_n\, .$$
\end{lemma}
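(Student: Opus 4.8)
The plan is to prove both halves by the same orthogonal-decomposition argument, treating the first claim in detail since the second is entirely symmetric. Fix $n\in\mathbb{N}$ and $\theta\in L^\infty(\Omega)$ with $\theta\geq\hat\lambda_n$ but $\theta\not\equiv\hat\lambda_n$. For $u\in\bar H_n=\oplus_{i=1}^n E(\hat\lambda_i)$ the variational characterization $({\rm p}_4)$ gives $\mathcal{E}(u)\leq\hat\lambda_n\Vert u\Vert_2^2$, hence
\begin{equation}\label{eqn:first}
\mathcal{E}(u)-\int_\Omega\theta(x)u(x)^2dx\leq\int_\Omega\bigl(\hat\lambda_n-\theta(x)\bigr)u(x)^2dx\leq 0,
\end{equation}
since $\theta\geq\hat\lambda_n$ a.e. So the inequality holds with constant $0$; the point is to upgrade the ``$\leq 0$'' to ``$\leq-\bar c\Vert u\Vert^2$'' for some $\bar c>0$, using that $\bar H_n$ is finite dimensional by $({\rm p}_1)$ and that equality in \eqref{eqn:first} forces $u=0$.

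First I would record the compactness: $\bar H_n$ is finite dimensional, so on its unit sphere $S:=\{u\in\bar H_n:\Vert u\Vert=1\}$ the continuous functional $u\mapsto\mathcal{E}(u)-\int_\Omega\theta u^2\,dx$ attains its maximum, say at some $u_0\in S$; call the maximum $-\bar c$, so $\bar c\geq 0$ by \eqref{eqn:first}. It remains to exclude $\bar c=0$. If $\bar c=0$ then \eqref{eqn:first} is an equality at $u_0$, which forces $\int_\Omega(\hat\lambda_n-\theta)u_0^2\,dx=0$ and $\mathcal{E}(u_0)=\hat\lambda_n\Vert u_0\Vert_2^2$. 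Since $\hat\lambda_n-\theta\leq 0$ a.e., the first identity gives $u_0(x)=0$ for a.e.\ $x$ in the set $\{x:\theta(x)>\hat\lambda_n\}$, which has positive Lebesgue measure because $\theta\not\equiv\hat\lambda_n$. Next, writing $u_0=\sum_{i=1}^n c_i e_i$ with $e_i\in E(\hat\lambda_i)$, the equality $\mathcal{E}(u_0)=\hat\lambda_n\Vert u_0\Vert_2^2$ together with $\mathcal{E}(e_i)=\hat\lambda_i\Vert e_i\Vert_2^2$ and orthogonality forces $c_i=0$ whenever $\hat\lambda_i<\hat\lambda_n$; thus $u_0\in E(\hat\lambda_n)$ (after discarding the lower eigenspaces, or rather $u_0$ lies in the top block). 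Then $u_0\in E(\hat\lambda_n)$ vanishes on a set of positive measure, so $({\rm p}_2)$ gives $u_0=0$, contradicting $u_0\in S$. Hence $\bar c>0$, and homogeneity of both sides (degree $2$) extends the bound from $S$ to all of $\bar H_n$.

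For the second statement one argues identically on $\hat H_n=\oplus_{i=n}^\infty E(\hat\lambda_i)$: now $({\rm p}_4)$ yields $\mathcal{E}(u)\geq\hat\lambda_n\Vert u\Vert_2^2$ for $u\in\hat H_n$, so $\mathcal{E}(u)-\int_\Omega\theta u^2\,dx\geq\int_\Omega(\hat\lambda_n-\theta)u^2\,dx\geq 0$ because now $\theta\leq\hat\lambda_n$. The obstruction is that $\hat H_n$ is infinite dimensional, so the unit-sphere compactness argument must be replaced: I would instead argue by contradiction with a normalized minimizing sequence $v_m$, $\Vert v_m\Vert=1$, for which $\mathcal{E}(v_m)-\int_\Omega\theta v_m^2\,dx\to 0$; pass to a weak limit $v_m\rightharpoonup v$ in $H^1$ with strong $L^2$ and $L^2(\partial\Omega)$ convergence as in \eqref{conv}, use weak lower semicontinuity of $\mathcal{E}$ (exactly as in the proof of Lemma \ref{hatab}) to get $\mathcal{E}(v)-\int_\Omega\theta v^2\,dx\leq 0$, combine with the reverse inequality valid on $\hat H_n$ (which is weakly closed) to get equality, deduce $v=0$ via $({\rm p}_2)$ as above, and finally conclude $\Vert\nabla v_m\Vert_2\to 0$ hence $\Vert v_m\Vert\to 0$, contradicting $\Vert v_m\Vert=1$.

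The main obstacle is this last infinite-dimensional case: one cannot simply extremize over a sphere, and one must be slightly careful that the blocks of $v$ below index $n$ are absent by construction while the ``spectral gap'' argument (equality $\Rightarrow v\in E(\hat\lambda_n)$) combined with $({\rm p}_2)$ still kills $v$. Everything else is routine bookkeeping with the orthonormal eigenbasis and the homogeneity of the quadratic forms.
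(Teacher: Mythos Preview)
Your proposal is correct and follows precisely the approach the paper indicates: it invokes only orthogonality of the eigenfunctions, the variational characterization $({\rm p}_4)$, and the unique continuation property $({\rm p}_2)$, which are exactly the three ingredients the paper cites in lieu of a written proof. Your handling of the infinite-dimensional half via a normalized minimizing sequence and weak lower semicontinuity of $\mathcal{E}$ (mirroring the proof of Lemma~\ref{hatab}) is the natural way to supply the missing compactness, and the concluding step $\mathcal{E}(v_m)\to 0\Rightarrow\Vert\nabla v_m\Vert_2\to 0$ is justified exactly as in that lemma.
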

Finally, consider the weighted eigenvalue problem
\begin{equation}\label{pep}
-\Delta u+a(x)u=\lambda\alpha(x) u\quad\mbox{in}\quad\Omega,\quad \frac{\partial u}{\partial n}+\beta(x)u=0\quad
\mbox{on}\quad\partial\Omega,
\end{equation}
 where $\alpha\in L^\infty(\Omega)\setminus\{0\}$ and $\alpha\geq 0$. Arguing as before produces an increasing sequence $\{\hat\lambda_n(\alpha)\}$ of eigenvalues for \eqref{pep}, which enjoys similar properties. In particular, via the analogue of $({\rm p}_2)$ we achieve the following (cf. \cite[Proposition 1]{deFG}):
\begin{itemize}
\item[$({\rm p}_9)$] \textit{If $\alpha_1,\alpha_2\in L^\infty(\Omega)\setminus\{0\}$, $0\leq\alpha_1\leq\alpha_2$, and $\alpha_1\neq\alpha_2$ then $\hat\lambda_n(\alpha_2)<\hat\lambda_n(\alpha_1)$ for all $n\in\mathbb{N}$.}
\end{itemize}
\section{Existence results}\label{S3}
To avoid unnecessary technicalities, `for every $x\in\Omega$' will take the place of `for almost every $x\in\Omega$' and the variable $x$ will be omitted when no confusion can arise. Define
$$n_0:=\inf\{ n\in\mathbb{N}:\hat\lambda_n\geq 0\}.$$
Let $f:\Omega\times\mathbb{R}\to\mathbb{R}$ be a Carath\'eodory function such that $f(\cdot,0)=0$ and let
\begin{equation}\label{defF}
F(x,\xi):=\int_0^\xi f(x,t)dt\, ,\quad (x,\xi)\in\Omega\times\mathbb{R}.
\end{equation}
We will posit the following assumptions.
\begin{itemize}
\item[$({\rm f}_1)$] \textit{There exists $a_0\in L^\infty(\Omega)$ such that}
$$|f(x,t)|\leq a_0(x)(1+|t|)\quad\forall\, (x,t)\in\Omega\times\R.$$
\item[$({\rm f}_2)$] \textit{$\displaystyle{\limsup_{t\to+\infty}\frac{f(x,t)}{t}}\leq\hat\lambda_1$ and $\displaystyle{\lim_{t\to+\infty}}[f(x,t)t-2F(x,t)]=+\infty$ uniformly in $x\in\Omega$.}
\item[$({\rm f}_3)$] \textit{For some $k\geq\max\{n_0,2\}$ one has
$$\hat\lambda_k\leq\liminf_{t\to-\infty}\frac{f(x,t)}{t}\leq\limsup_{t\to-\infty}\frac{f(x,t)}{t}\leq\hat\lambda_{k+1}$$
uniformly with respect to $x\in\Omega$ and
\begin{equation}\label{rminus}
f(x,t)t-2F(x,t)\geq 0\quad\forall\, (x,t)\in\Omega\times\R^-_0.
\end{equation}
}
\item[$({\rm f}_4)$] \textit{There exist $a_1,a_2\in L^\infty(\Omega)$ such that $\hat\lambda_2<a_1\leq a_2$ and
$$a_1(x)\leq\liminf_{t\to 0}\frac{f(x,t)}{t}\leq\limsup_{t\to 0}\frac{f(x,t)}{t}\leq a_2(x)$$
uniformly in $x\in\Omega$.}
\end{itemize}
We start by pointing out the next auxiliary results.
\begin{lemma}\label{auxone}
Let $({\rm f}_2)$ be satisfied. Then
$$\lim_{t\to+\infty}[\hat\lambda_1t^2-2F(x,t)]=+\infty$$
 uniformly with respect to $x\in\Omega$.
\end{lemma}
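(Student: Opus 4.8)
The plan is to exploit the elementary identity
$$\frac{d}{dt}\!\left[\frac{F(x,t)}{t^{2}}\right]=\frac{f(x,t)t-2F(x,t)}{t^{3}}\,,\qquad t>0\,,$$
which converts the ``energy'' quantity in the second half of $({\rm f}_2)$ into a multiple of the $t$\nobreakdash-derivative of $t\mapsto F(x,t)/t^{2}$. Since $f(x,\cdot)$ is continuous, $F(x,\cdot)$ is $C^{1}$, so the identity holds classically and the fundamental theorem of calculus applies on every interval $[s,t]\subseteq(0,+\infty)$. The whole argument is then a one-variable estimate, uniform in $x$; no compactness or variational input is needed.

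First I would record that, for each $x\in\Omega$,
$$\limsup_{t\to+\infty}\frac{F(x,t)}{t^{2}}\leq\frac{\hat\lambda_1}{2}\,.$$
Indeed, given $\ep>0$ the first part of $({\rm f}_2)$ provides $M_\ep>0$ with $f(x,t)\leq(\hat\lambda_1+\ep)t$ for all $t\geq M_\ep$; integrating on $[M_\ep,t]$ yields $2F(x,t)\leq 2F(x,M_\ep)+(\hat\lambda_1+\ep)(t^{2}-M_\ep^{2})$, and dividing by $t^{2}$, letting $t\to+\infty$, then $\ep\to0^{+}$, gives the claim.

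Now fix an arbitrary $\mathcal M>0$. By the second part of $({\rm f}_2)$ there is $T\geq 1$, independent of $x$, such that $f(x,t)t-2F(x,t)\geq\mathcal M$ whenever $t\geq T$ and $x\in\Omega$. Fix $x\in\Omega$ and $s\geq T$; integrating the identity on $[s,t]$ we get, for all $t>s$,
$$\frac{F(x,t)}{t^{2}}=\frac{F(x,s)}{s^{2}}+\int_{s}^{t}\frac{f(x,\tau)\tau-2F(x,\tau)}{\tau^{3}}\,d\tau\,.$$
As $t\to+\infty$ the integral on the right is nondecreasing, its integrand being $\geq\mathcal M\tau^{-3}>0$, hence it has a limit; by the previous paragraph the left-hand side has $\limsup\leq\hat\lambda_1/2$, so that limit is finite and
$$\frac{F(x,s)}{s^{2}}+\int_{s}^{+\infty}\frac{f(x,\tau)\tau-2F(x,\tau)}{\tau^{3}}\,d\tau\leq\frac{\hat\lambda_1}{2}\,.$$
Since $\int_{s}^{+\infty}\tau^{-3}\,d\tau=\frac{1}{2s^{2}}$, the integral is at least $\mathcal M/(2s^{2})$, and multiplying through by $2s^{2}>0$ gives $\hat\lambda_1 s^{2}-2F(x,s)\geq\mathcal M$ for every $s\geq T$ and every $x\in\Omega$. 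As $\mathcal M$ was arbitrary, this is precisely the asserted uniform limit.

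The only point needing a little care is the passage to the improper integral: one must note that finiteness of $\limsup_{t\to+\infty}F(x,t)/t^{2}$ forces $\int_{s}^{+\infty}(f(x,\tau)\tau-2F(x,\tau))\tau^{-3}\,d\tau$ to converge rather than diverge to $+\infty$. Everything else is routine. (An equivalent packaging is to observe that $\psi(x,t):=\hat\lambda_1 t^{2}-2F(x,t)$ satisfies $\frac{d}{dt}[\psi(x,t)/t^{2}]=-2(f(x,t)t-2F(x,t))/t^{3}$, so $t\mapsto\psi(x,t)/t^{2}$ is eventually nonincreasing with a limit $\ell(x)=\hat\lambda_1-\lim_t 2F(x,t)/t^{2}\geq0$, and then the same integration over $[s,\infty)$ yields $\psi(x,s)\geq\mathcal M$.)
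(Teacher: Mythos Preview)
Your proof is correct and follows essentially the same route as the paper's: both exploit the identity $\dfrac{d}{dt}\bigl(F(x,t)/t^{2}\bigr)=\bigl(f(x,t)t-2F(x,t)\bigr)/t^{3}$, integrate from a fixed $s\geq T$ outward, and combine the lower bound $f(x,t)t-2F(x,t)\geq\mathcal M$ with the upper bound $\limsup_{t\to+\infty}2F(x,t)/t^{2}\leq\hat\lambda_1$ to conclude. Your write-up is slightly more explicit in justifying that last $\limsup$ inequality and in handling the passage to the improper integral, but the argument is the same.
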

\begin{proof}
Given any $K>0$, one can find $\delta_K>0$ such that  $f(x,t)t-2F(x,t)>K$ for all $(x,t)\in\Omega\times [\delta_K,+\infty)$. Hence,
$$\frac{d}{dt}\left(\frac{F(x,t)}{t^2}\right)>\frac{K}{t^3}\quad\mbox{provided}\quad t\geq\delta_K$$
and, a fortiori,
$$\frac{F(x,\xi)}{\xi^2}-\frac{F(x,t)}{t^2}>-\frac{K}{2}\left(\frac{1}{\xi^2}-\frac{1}{t^2}\right)$$
whenever $\xi\geq t\geq\delta_K$. Since
$$\limsup_{\xi\to+\infty}\frac{2F(x,\xi)}{\xi^2}\leq\hat\lambda_1\quad\mbox{uniformly in $x\in\Omega$,}$$
the above inequality produces $\hat\lambda_1 t^2-2F(x,t)\geq K$ for every $(x,t)\in\Omega\times [\delta_K,+\infty)$. As $K$ was arbitrary, the conclusion follows.
\end{proof}
\begin{lemma}\label{auxtwo}
If \eqref{abeta}, $({\rm f}_1)$, and $({\rm f}_4)$ hold, then every nontrivial solution $u_0\geq 0$ of \eqref{prob} belongs to ${\rm int}(C_+)$.
\end{lemma}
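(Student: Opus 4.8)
The plan is to run the standard elliptic bootstrap: first show a nonnegative nontrivial weak solution lies in $L^\infty(\Omega)$, then in $C^{1,\gamma}(\overline\Omega)$, and finally invoke the strong maximum principle to push it into ${\rm int}(C_+)$. Let $u_0\in H^1(\Omega)$ be a weak solution with $u_0\geq 0$, $u_0\neq 0$. By Remark \ref{equiv} (with $w:=f(\cdot,u_0(\cdot))-a(\cdot)u_0(\cdot)\in L^2(\Omega)$, which is legitimate since $a\in L^s(\Omega)$, $s>N$, $u_0\in H^1(\Omega)\subseteq L^{2^*}(\Omega)$, and $(\mathrm{f}_1)$ gives the growth bound on $f(\cdot,u_0)$), $u_0$ solves $-\Delta u_0=w$ a.e. in $\Omega$ with $\partial u_0/\partial n+\beta u_0=0$ on $\partial\Omega$. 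Because $(\mathrm{f}_1)$ yields $|f(x,u_0)|\leq a_0(x)(1+|u_0|)$ with $a_0\in L^\infty$, the Moser/De Giorgi iteration for Robin boundary conditions (as in \cite{W}, Section 5) upgrades $u_0\in L^\infty(\Omega)$. Then $w\in L^s(\Omega)$ for the same $s>N$, so Calder\'on--Zygmund estimates for the Robin Laplacian (again \cite{W}) place $u_0\in W^{2,s}(\Omega)\hookrightarrow C^{1,\gamma}(\overline\Omega)$ with $\gamma=1-N/s$; in particular $u_0\in C_+$.

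Next I would extract, from $(\mathrm{f}_4)$ together with $(\mathrm{f}_1)$, a pointwise lower bound of the form $f(x,t)\geq -\hat c\, t$ for all $t\geq 0$ and some constant $\hat c>0$: near zero this follows from $\liminf_{t\to 0}f(x,t)/t\geq a_1(x)$ (which bounds $f(x,t)/t$ below uniformly for small $t>0$), and for $t$ bounded away from $0$ the linear growth $(\mathrm{f}_1)$ gives $f(x,t)\geq -a_0(x)(1+t)\geq -\hat c\, t$ after enlarging $\hat c$. Consequently, setting $\xi:=a(\cdot)+\hat c\in L^s(\Omega)$, we get
$$-\Delta u_0+\xi(x)u_0=f(x,u_0)+\hat c\,u_0\geq 0\quad\text{a.e. in }\Omega,$$
with $\partial u_0/\partial n+\beta u_0=0$ on $\partial\Omega$. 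Since $u_0\in C^{1,\gamma}(\overline\Omega)$ is a nonnegative supersolution of a linear operator with $L^s$-potential ($s>N$) that is not identically zero, the strong maximum principle and the Hopf boundary-point lemma — in the Robin-boundary form recorded in \cite{PuSe} (Theorems 2.5.2 and 5.5.1, already used for $(\mathrm{p}_6)$) — force $u_0(x)>0$ for every $x\in\overline\Omega$, i.e. $u_0\in{\rm int}(C_+)$.

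The main obstacle is the very first step, the $L^\infty$-bound: the Robin boundary condition contributes a boundary integral $\int_{\partial\Omega}\beta u^2\,d\sigma\geq 0$ that must be controlled in the truncation/iteration scheme, and one has to be careful that the potential $a$ is merely in $L^s(\Omega)$, only $a^+\in L^\infty(\Omega)$. However, the negative part $a^-$ only helps in the subsolution estimate, and the nonlinear Green's identity plus the regularity machinery of \cite{W} were developed precisely for this Robin setting, so once $L^\infty$ is in hand the rest is routine. I would therefore phrase the proof as: invoke Remark \ref{equiv}; cite \cite[Section 5]{W} for $u_0\in L^\infty(\Omega)\cap C^{1,\gamma}(\overline\Omega)$; derive the linear lower bound for $f$ from $(\mathrm{f}_1)$ and $(\mathrm{f}_4)$; and conclude with the strong maximum principle.
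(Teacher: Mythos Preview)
Your proposal is correct and follows essentially the same strategy as the paper's proof: regularity bootstrap via \cite{W} to reach $u_0\in C^{1,\gamma}(\overline\Omega)$, followed by the strong maximum principle/Hopf lemma from \cite{PuSe}. The only tactical difference is that the paper first combines $(\mathrm{f}_1)$ and $(\mathrm{f}_4)$ into the two-sided bound $|f(x,t)|\leq c_0|t|$ and then linearizes via $b(x):=f(x,u_0(x))/u_0(x)\in L^\infty(\Omega)$, so that the maximum-principle step reads $\Delta u_0\leq(\|a^+\|_\infty+\|b\|_\infty)u_0$ with a \emph{bounded} constant, whereas you carry the $L^s$-potential $\xi=a+\hat c$ through; both routes are valid.
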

\begin{proof}
Using $({\rm f}_1)$ and $({\rm f}_4)$ we get $c_0>0$ such that $|f(x,t)|\leq c_0|t|$ in $\Omega\times\mathbb{R}$. Therefore, the function $b:\Omega\to\R$ defined by
$$b(x):=\frac{f(x,u_0(x))}{u_0(x)}\quad\mbox{if $u_0(x)\neq 0$,}\quad b(x):=0\quad\mbox{otherwise,}$$
is essentially bounded. Since $u_0$ turns out to be a weak solution of the problem
$$-\Delta u=[b(x)-a(x)]u\quad\mbox{in $\Omega$,}\quad\frac{\partial u}{\partial n}+\beta(x)u=0\quad\mbox{on $\partial\Omega$,}$$
where, because of \eqref{abeta}, $b-a\in L^s(\Omega)$ for some $s>N$,  Lemma 5.1 in \cite{W} and  the Moser iteration technique yield  $u_0\in L^\infty(\Omega)$. Through  \cite[Lemma 5.2]{W} we achieve $u_0\in C^{1,\alpha}(\overline{\Omega})$. So, in particular, $u_0\in C_+\setminus\{0\}$. Finally, from
$$\Delta u_0(x)\leq\left(\Vert a^+\Vert_\infty+\Vert b\Vert_\infty\right) u_0(x)\quad\mbox{for every $x\in\Omega$}$$
and the Boundary Point Lemma \cite[p. 120]{PuSe} it follows $u_0\in{\rm int}(C_+)$, as desired.
\end{proof}
To shorten notation, write $X:=H^1(\Omega)$. The energy functional $\varphi:X\to\R$ stemming from Problem \eqref{prob} is
\begin{equation}\label{defphi}
\varphi(u):=\frac{1}{2}{\cal E}(u)-\int_\Omega F(x,u(x))\, dx,\quad u\in X,
\end{equation}
with ${\cal E}$ and $F$ given by \eqref{defE} and (\ref{defF}), respectively. One clearly has $\varphi\in C^1(X)$. Moreover,
\begin{proposition}\label{cerami}
Under \eqref{abeta} and $({\rm f}_1)$--$({\rm f}_3)$, the functional $\varphi$ satisfies Condition (C). 
\end{proposition}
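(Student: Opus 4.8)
The goal is to show that every sequence $\{u_n\}\subseteq X$ with $\{\varphi(u_n)\}$ bounded and $(1+\Vert u_n\Vert)\Vert\varphi'(u_n)\Vert_{X^*}\to0$ admits a convergent subsequence. Since $X=H^1(\Omega)$ is Hilbert and, by the spectral construction, $K\circ i$ is compact, the standard final step will be: once $\{u_n\}$ is shown to be bounded in $X$, pass to a weakly convergent subsequence $u_n\rightharpoonup u$, test $\varphi'(u_n)$ against $u_n-u$, use the compact embeddings $X\hookrightarrow L^2(\Omega)$ and $\gamma:X\to L^2(\partial\Omega)$ together with the growth bound $({\rm f}_1)$ to kill all lower-order terms, and conclude $\Vert\nabla(u_n-u)\Vert_2^2\to0$, hence $u_n\to u$ in $X$. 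So the real content is the a priori bound $\sup_n\Vert u_n\Vert<+\infty$, and I expect that to be the main obstacle because resonance is allowed at $\hat\lambda_1$ from the right and along $[\hat\lambda_k,\hat\lambda_{k+1}]$ from the left.

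I would argue the boundedness by contradiction: suppose $\Vert u_n\Vert\to+\infty$ and set $v_n:=u_n/\Vert u_n\Vert$, so $\Vert v_n\Vert=1$ and, up to a subsequence, $v_n\rightharpoonup v$ in $X$, $v_n\to v$ in $L^2(\Omega)$ and in $L^2(\partial\Omega)$. Dividing the relation $\langle\varphi'(u_n),h\rangle\to0$ (in the rescaled form, i.e. after dividing by $\Vert u_n\Vert$) by $\Vert u_n\Vert$ and using $({\rm f}_1)$ to see that $N_f(u_n)/\Vert u_n\Vert$ is bounded in $L^2(\Omega)$, I can extract a further subsequence with $N_f(u_n)/\Vert u_n\Vert\rightharpoonup \eta$ weakly in $L^2(\Omega)$; a by-now routine argument (the limits superior/inferior of $f(x,t)/t$ at $\pm\infty$ from $({\rm f}_2)$, $({\rm f}_3)$) shows $\eta=\theta_+ v^+-\theta_- v^-$ with measurable weights satisfying $\theta_+\le\hat\lambda_1$ on $\{v>0\}$ and $\hat\lambda_k\le\theta_-\le\hat\lambda_{k+1}$ on $\{v<0\}$. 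Passing to the weak limit in the rescaled equation gives
$$(v,h)_{\hat a}-\int_\Omega(\hat a+\eta)vh\,dx=0\quad\forall\,h\in X,$$
i.e. ${\cal E}(v)$ is realized by the eigenvalue equation $-\Delta v+a(x)v=\eta$ with Robin conditions; testing with $h=v$ and the reasoning behind Lemma~\ref{splitting} forces $v^+=0$ (because $\theta_+\le\hat\lambda_1$ would otherwise only allow $v^+\in E(\hat\lambda_1)$, excluded by a strict-inequality part of $({\rm f}_2)$ or by the subsequent second-order information) and $v^-$ to be an eigenfunction associated to some $\hat\lambda_j$, $k\le j\le k+1$; in fact I first want to rule out $v\equiv0$, using strong convergence $v_n\to v$ in $X$ which follows from the same test as in the compactness step applied to the rescaled sequence, contradicting $\Vert v_n\Vert=1$. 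Thus $v=v^-\neq0$ is a (nodal-free, sign-definite negative) eigenfunction, in particular $v<0$ on all of $\Omega$ by $({\rm p}_2)$.

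It remains to derive a contradiction from the existence of such $v$, and here the Cerami (not merely Palais–Smale) condition and the second halves of $({\rm f}_2)$, $({\rm f}_3)$ are decisive. On the negative side, $({\rm f}_3)$ gives $f(x,t)t-2F(x,t)\ge0$ for $t\le0$, which combined with $\varphi(u_n)$ bounded and $\langle\varphi'(u_n),u_n\rangle\to0$ (the Cerami condition provides exactly the quantity $\langle\varphi'(u_n),u_n\rangle\to0$ without a diverging factor) yields, after writing $2\varphi(u_n)-\langle\varphi'(u_n),u_n\rangle=\int_\Omega[f(x,u_n)u_n-2F(x,u_n)]\,dx$, an upper bound on $\int_\Omega[f(x,u_n)u_n-2F(x,u_n)]\,dx$; since $v<0$ a.e. we have $u_n(x)\to-\infty$ a.e., so $u_n=u_n^+-u_n^-$ with $u_n^+\to0$ in $L^2$ and $u_n^-\to+\infty$ a.e.; the positive-part contribution is controlled by $({\rm f}_2)$ via Lemma~\ref{auxone} (the integrand $\hat\lambda_1 t^2-2F(x,t)\to+\infty$), which by Fatou forces $\int_\Omega[\hat\lambda_1(u_n^+)^2-2F(x,u_n^+)]\,dx\to+\infty$ unless $u_n^+$ stays bounded — leading to a contradiction with the bound on $\varphi(u_n)$, after noting ${\cal E}(u_n^+)-\hat\lambda_1\Vert u_n^+\Vert_2^2\ge0$ by \eqref{lone}. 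Symmetrically, the negative part is handled because $v$ is an eigenfunction in $[\hat\lambda_k,\hat\lambda_{k+1}]$ and $({\rm f}_3)$'s sign condition $f t-2F\ge0$ on $\R^-_0$ prevents $\int_\Omega[f(x,u_n)u_n-2F(x,u_n)]\,dx$ from tending to $+\infty$ there, so the only way to reconcile the two sides is $\Vert u_n\Vert\not\to+\infty$. Once boundedness is in hand, the convergent-subsequence conclusion follows from the Hilbert-space argument sketched first. The delicate point throughout is keeping track of which inequalities in $({\rm f}_2)$–$({\rm f}_3)$ are strict and correctly invoking $({\rm p}_2)$ (an eigenfunction vanishing on a positive-measure set is zero) to pin down $v$; that bookkeeping, rather than any single estimate, is where the proof can go wrong.
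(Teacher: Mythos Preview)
Your overall strategy (normalize, extract a weak limit, identify the limiting linear problem, rule out $v\equiv 0$ by strong convergence, then derive a contradiction) is sound, and the final compactness step once boundedness is known is correct. But the heart of the argument---the contradiction once $v\neq 0$ is identified---has a genuine gap, and the case analysis on $v^+$ is not right either.

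First, on $v^+$: there is \emph{no} ``strict-inequality part of $({\rm f}_2)$''; resonance $\theta_+\equiv\hat\lambda_1$ is explicitly allowed. The only way to exclude $v^+=c\hat u_1$ with $c>0$ is the second clause of $({\rm f}_2)$: if $c>0$ then $v>0$ a.e., hence $u_n(x)\to+\infty$ a.e., and Fatou together with $f(x,t)t-2F(x,t)\to+\infty$ and \eqref{rminus} forces $\int_\Omega[f(x,u_n)u_n-2F(x,u_n)]\,dx\to+\infty$, contradicting the bound coming from $2\varphi(u_n)-\langle\varphi'(u_n),u_n\rangle$. That is precisely where the second-order condition is used, and it belongs to this sub-case, not to the one you place it in later.

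Second, and more seriously: once you reach ``$v=-v^-\neq 0$ is a sign-definite eigenfunction associated with some $\hat\lambda_j$, $j\in\{k,k+1\}$'', you are already done. Since $k\geq 2$, every eigenfunction for $\hat\lambda_j$ with $j\geq 2$ is nodal (combine $({\rm p}_5)$ and $({\rm p}_7)$, or simply: a nonzero sign-definite eigenfunction must belong to $E(\hat\lambda_1)$), which contradicts $v\leq 0$. You write ``It remains to derive a contradiction'' and then invoke Lemma~\ref{auxone} and Fatou on $u_n^+$; but in this branch $v<0$ a.e., so $u_n^+(x)\to 0$ a.e., and Fatou applied to $\hat\lambda_1(u_n^+)^2-2F(x,u_n^+)$ gives no divergence. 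Likewise, the observation that $f(x,t)t-2F(x,t)\geq 0$ on $\R^-_0$ and that $\int_\Omega[f(x,u_n)u_n-2F(x,u_n)]\,dx$ is bounded are perfectly compatible; no contradiction follows. So your closing paragraph does not produce a contradiction at all.

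For comparison, the paper avoids this tangle by treating $u_n^+$ and $u_n^-$ \emph{separately}: first normalize $u_n^+/\Vert u_n^+\Vert$ and use the second clause of $({\rm f}_2)$ together with \eqref{rminus} to bound $\{u_n^+\}$; then, knowing $\{u_n^+\}$ is bounded, normalize $u_n^-/\Vert u_n^-\Vert$, pass to the limit, and obtain a nonnegative weak solution of a problem with weight $\alpha\in[\hat\lambda_k,\hat\lambda_{k+1}]$, which by $({\rm p}_9)$ either forces the limit to vanish (contradiction with strong convergence) or to be a genuine eigenfunction for $\hat\lambda_k$ or $\hat\lambda_{k+1}$, hence nodal---contradicting nonnegativity. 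Your single normalization can be made to work, but only if you place the $({\rm f}_2)$ second-order argument in the $v^+\neq 0$ sub-case and recognize the nodal contradiction in the $v^+=0$ sub-case.
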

\begin{proof}
Let $\{u_n\}$ be a sequence in $X$ be such that
\begin{equation}\label{bounded}
|\varphi(u_n)|\leq c_1\quad\forall\, n\in\mathbb{N},
\end{equation}
\begin{equation}\label{limit}
\lim_{n\to+\infty}(1+\Vert u_n\Vert)\Vert\varphi'(u_n)\Vert_{X^*}=0.
\end{equation}
We first show that $\{u_n\}$ is bounded. This evidently happens once the same holds for both $\{ u_n^+\}$ and $\{ u_n^-\}$. \\
\textit{Claim 1: The sequence $\{u^+_n\}$ is bounded.}\\
If the assertion was false then, up to subsequences, $\Vert u_n^+\Vert\to+\infty$. Write $v_n:=\Vert u_n^+\Vert^{-1}u_n^+$. From $\Vert v_n\Vert=1$ it follows, along a subsequence when necessary,
\begin{equation}\label{weak}
v_n\rightharpoonup v\quad\mbox{in $X$},\quad v_n\to v\quad\mbox{in $L^{2N/(N-1)}(\Omega)$ and in $L^2(\partial\Omega)$.}
\end{equation}
Through \eqref{limit} one has $\langle\varphi'(u_n),u_n^+\rangle\to 0$, which, dividing by $\Vert u_n^+\Vert^2$, easily entails
\begin{equation}\label{zero}
{\cal E}(v_n)\leq\ep_n+\int_\Omega\frac{f(x,u_n^+(x))}{\Vert u_n^+\Vert}v_n(x)\, dx\quad\forall\, n\in\mathbb{N},
\end{equation}
where $\ep_n\to 0^+$. Because of $({\rm f}_1)$ the sequence $\{\Vert u_n^+\Vert^{-1} N_f(u_n^+)\}\subseteq L^2(\Omega)$ is bounded. Via the same reasoning made in \cite[pp. 317--318]{MoMoPa} we thus get a function $\theta\in L^{\infty}(\Omega)$ such that $\theta\leq\hat\lambda_1$ and
$$\frac{1}{\Vert u_n^+\Vert} N_f(u_n^+)\rightharpoonup\theta v\quad \mbox{in}\quad L^2(\Omega).$$
Thanks to \eqref{weak}--\eqref{zero} this produces, as $n\to+\infty$,
\begin{equation}\label{us}
{\cal E}(v)-\int_\Omega\theta(x) v(x)^2 dx\leq 0.
\end{equation}
If $\theta\neq\hat\lambda_1$ then, by Lemma \ref{splitting}, $v=0$. Consequently, on account of \eqref{weak}--\eqref{zero} again, $v_n\to 0$ in $X$, which contradicts $\Vert v_n\Vert\equiv1$. Otherwise, from \eqref{us}, \eqref{lone}, $({\rm p}_5)$, and $({\rm p}_6)$ it follows $v=t\hat u_1$ for some $t>0$. So, $u_n^+\to+\infty$ a.e. in $\Omega$. Using $({\rm f}_2)$ and Fatou's Lemma we thus obtain
\begin{equation}\label{absone}
\lim_{n\to+\infty}\int_\Omega[f(x,u_n^+(x))u_n^+(x)-2F(x,u_n^+(x))]\, dx=+\infty.
\end{equation}
On the other hand, \eqref{bounded} forces
$${\cal E}(u_n)-\int_\Omega 2F(x,u_n(x))\, dx\leq 2c_1$$
while \eqref{limit} easily yields
$$-{\cal E}(u_n)+\int_\Omega f(x,u_n(x))u_n(x)\, dx\leq c_2.$$
Therefore, on account of \eqref{rminus},
$$\int_\Omega[f(x,u_n^+)u_n^+-2F(x,u_n^+)]\, dx\leq\int_\Omega[f(x,u_n)u_n-2F(x,u_n)]\, dx\leq 2c_1+c_2\quad\forall\, n\in\mathbb{N},$$ 
which contradicts \eqref{absone}.\\
\textit{Claim 2: The sequence $\{u^-_n\}$  is bounded.}\\
If the assertion was false then, up to subsequences, $\Vert u_n^- \Vert\to+\infty$. Write, like before, $w_n:=\Vert u_n^-\Vert^{-1}u_n^-$. From $\Vert w_n\Vert\equiv 1$ it follows, along a subsequence when necessary,
\begin{equation}\label{weakminus}
w_n\rightharpoonup w\quad\mbox{in $X$},\quad w_n\to w\quad\mbox{in $L^{2N/(N-1)}(\Omega)$ and in $L^2(\partial\Omega)$,}\quad w\geq 0.
\end{equation}
Through \eqref{limit} one has
\begin{equation}\label{aun}
\left|\frac{1}{2}\langle{\cal E}'(u_n),v\rangle-\int_\Omega f(x,u_n)v\, dx\right|\leq\ep_n\Vert v\Vert\quad\forall\, v\in X,
\end{equation}
where $\ep_n\to 0^+$. A simple computation based on  $({\rm f}_1)$ and the  boundedness of $\{u_n^+\}$ immediately leads to
\begin{equation}\label{aunplus}
\left|\frac{1}{2}\langle{\cal E}'(u_n^+),v\rangle-\int_\Omega f(x,u_n^+)v\, dx\right|\leq c_3\Vert v\Vert.
\end{equation}
Since $u_n=u_n^+-u_n^-$, inequalities \eqref{aun}--\eqref{aunplus} produce, after dividing by $\Vert u_n^-\Vert$,
\begin{equation}\label{awn}
\left|\frac{1}{2}\langle{\cal E}'(-w_n),v\rangle-\frac{1}{\Vert u_n^-\Vert}\int_\Omega f(x,-u_n^-)v\, dx\right|\leq\ep'_n\Vert v\Vert,\quad v\in X,
\end{equation}
with $\ep'_n\to 0^+$. Observe next that, by $({\rm f}_1)$ besides \eqref{weakminus},
$$\lim_{n\to+\infty}\frac{1}{\Vert u_n^-\Vert}\int_\Omega f(x,-u_n^-)(w_n-w)\, dx=0.$$
So, \eqref{awn} written for $v:=w_n-w$ provides
$$\lim_{n\to+\infty}\langle{\cal E}'(-w_n),w_n-w\rangle=0,$$
whence
\begin{equation}\label{limwn}
\lim_{n\to+\infty}w_n=w\quad\mbox{in}\quad X
\end{equation}
because, on account of \eqref{weakminus} and \eqref{abeta},
$$\int_\Omega a(x)w_n(x)(w_n(x)-w(x))\, dx\to 0,\quad\int_{\partial\Omega} \beta(x)w_n(x)(w_n(x)-w(x))\, d\sigma\to 0.$$
Thanks to  $({\rm f}_1)$ the sequence $\{\Vert u_n^-\Vert^{-1}N_f(-u_n^-)\} \subseteq L^2(\Omega)$ is bounded. Using the arguments made in \cite[pp. 317--318]{MoMoPa} we thus obtain a function $\alpha\in L^\infty(\Omega)$ such that $\hat\lambda_k \leq\alpha\leq\hat\lambda_{k+1}$ and
$$\frac{1}{\Vert u_n^-\Vert} N_f(-u_n^-)\rightharpoonup -\alpha w\quad\hbox{in}\quad L^2(\Omega).$$
By \eqref{awn}--\eqref{limwn} this implies, as $n\to+\infty$,
$$\frac{1}{2}\langle{\cal E}'(w),v\rangle=\int_\Omega\alpha(x)w(x)v(x)\, dx\quad\forall\, v\in X,$$
i.e., $w$ turns out to be a weak solution of the problem
\begin{equation}\label{absl}
-\Delta u+a(x)u=\alpha(x)u\quad\hbox{in}\quad\Omega,\quad \frac{\partial u}{\partial n}+\beta(x) u=0\quad\hbox{on}\quad \partial\Omega.
\end{equation}
If $\alpha\neq\hat\lambda_k$ and $\alpha\neq\hat\lambda_{k+1}$ then $({\rm p}_9)$ yields                                                                                                                                                                                                                 
$$\hat\lambda_k(\alpha)<\hat\lambda_k(\hat\lambda_k)=1=\hat\lambda_{k+1}(\hat\lambda_{k+1})<\hat\lambda_{k+1}(\alpha).$$
Therefore $w=0$, which contradicts $\Vert w\Vert=1$; cf. \eqref{limwn}. Otherwise, either $\alpha=\hat\lambda_k$ or $\alpha=\hat\lambda_{k+1}$. In both cases, via \eqref{absl} one sees that $w$ has to be nodal, against \eqref{weakminus}.

Summing up, the sequence $\{u_n\}\subseteq X$ is bounded. Along a subsequence when necessary, we may thus assume 
\begin{equation*}
u_n\rightharpoonup u\quad\mbox{in $X$},\quad u_n\to u\quad\mbox{in $L^{2N/(N-1)}(\Omega)$ and in $L^2(\partial\Omega)$,}
\end{equation*}
whence, like before, $u_n\to u$ in  $X$. This completes the proof.
\end{proof}
\subsection{Existence of at least two nontrivial solutions}\label{sect3.1}
Define, provided $x\in\Omega$ and $t,\xi\in\R$,
\begin{equation}\label{truncation}
\hat f_+(x,t):=f(x,t^+)+\hat a t^+,\quad \hat  F_+(x,\xi):=\int_0^\xi \hat f_+(x,t)\, dt.
\end{equation}
%
It is evident that the corresponding truncated functional
\begin{equation*}\label{phiplus}
\hat\varphi_+(u):=\frac{1}{2}\left({\cal E}(u)+\hat a\Vert u\Vert_2^2\right)-\int_\Omega \hat F_+(x,u(x))\, dx,\quad u\in X,
\end{equation*}
belongs to $C^1(X)$ also.
\begin{proposition}\label{coercive}
Let $({\rm f}_1)$--$({\rm f}_2)$ be satisfied. Then $\hat\varphi_+$ is coercive.
\end{proposition}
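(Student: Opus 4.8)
The plan is to peel off the negative part of $u$, which affects $\hat\varphi_+$ only through the quadratic form that Lemma \ref{hatab} renders coercive, and to reduce the whole statement to a claim about the positive part. Since $\nabla u^+\cdot\nabla u^-=0$ a.e. in $\Omega$ and $\gamma(u^\pm)=\gamma(u)^\pm$ on $\partial\Omega$, one has ${\cal E}(u)={\cal E}(u^+)+{\cal E}(u^-)$; combining this with $\Vert u\Vert_2^2=\Vert u^+\Vert_2^2+\Vert u^-\Vert_2^2$ and the elementary identity $\hat F_+(x,\xi)=F(x,\xi^+)+\frac{\hat a}{2}(\xi^+)^2$ (read off from \eqref{truncation}, using $f(\cdot,0)=0$) gives
$$\hat\varphi_+(u)=\hat\psi(u^+)+\frac12\left({\cal E}(u^-)+\hat a\Vert u^-\Vert_2^2\right),\qquad \hat\psi(w):=\frac12{\cal E}(w)-\int_\Omega F(x,w^+)\,dx,$$
and by Lemma \ref{hatab} the second summand is $\geq\frac{\hat b}{2}\Vert u^-\Vert^2\geq 0$. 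So the heart of the matter is the following assertion: \emph{$\hat\psi(w_n)\to+\infty$ whenever $w_n\geq 0$ and $\Vert w_n\Vert\to+\infty$.}

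Granting this assertion, coercivity of $\hat\varphi_+$ follows by a routine argument. From $({\rm f}_1)$ together with the Sobolev and trace embeddings (recall $a^-\in L^s$ with $s>N$, hence $2s/(s-1)<2^*$) one gets $|\hat\psi(w)|\leq c(1+\Vert w\Vert^2)$, so the assertion forces $\hat\psi$ to be bounded below on $\{w\in X:w\geq 0\}$, say by $-c_0$. If $\hat\varphi_+$ were not coercive there would be $\{u_n\}\subseteq X$ with $\Vert u_n\Vert\to\infty$ and $\hat\varphi_+(u_n)\leq M$; then $\frac{\hat b}{2}\Vert u_n^-\Vert^2\leq M-\hat\psi(u_n^+)\leq M+c_0$, so $\{u_n^-\}$ is bounded, whence $\Vert u_n^+\Vert\to\infty$ and thus $\hat\psi(u_n^+)\to+\infty$, contradicting $\hat\psi(u_n^+)\leq M$.

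To prove the assertion I would argue by contradiction: let $w_n\geq 0$ satisfy $\Vert w_n\Vert\to\infty$ and $\hat\psi(w_n)\leq M$, set $v_n:=\Vert w_n\Vert^{-1}w_n$, and pass to a subsequence with $v_n\rightharpoonup v\geq 0$ in $X$ and $v_n\to v$ in $L^{2N/(N-1)}(\Omega)$ and in $L^2(\partial\Omega)$. From $\limsup_{t\to+\infty}f(x,t)/t\leq\hat\lambda_1$ (uniformly) and $({\rm f}_1)$ one obtains, for each $\varepsilon>0$, a constant $c_\varepsilon>0$ with $F(x,\xi)\leq\frac{\hat\lambda_1+\varepsilon}{2}\xi^2+c_\varepsilon$ on $\Omega\times[0,+\infty)$; dividing $\hat\psi(w_n)\leq M$ by $\Vert w_n\Vert^2$ then yields $\frac12{\cal E}(v_n)\leq\frac{\hat\lambda_1+\varepsilon}{2}\Vert v_n\Vert_2^2+o(1)$, and letting $n\to\infty$ (weak lower semicontinuity of ${\cal E}$, $\Vert v_n\Vert_2\to\Vert v\Vert_2$) and then $\varepsilon\to 0^+$ gives ${\cal E}(v)\leq\hat\lambda_1\Vert v\Vert_2^2$. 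If $v=0$, the same estimate forces $\Vert\nabla v_n\Vert_2\to 0$, hence $\Vert v_n\Vert\to 0$, which is impossible; so $v\neq 0$, and then ${\cal E}(v)\leq\hat\lambda_1\Vert v\Vert_2^2$ combined with \eqref{lone} gives equality, so by $({\rm p}_5)$, $({\rm p}_6)$ and $v\geq 0$ we get $v=t\hat u_1$ with $t>0$. In particular $v>0$ a.e. in $\Omega$, so, along a further subsequence, $w_n=\Vert w_n\Vert v_n\to+\infty$ a.e. in $\Omega$. Now Lemma \ref{auxone} gives $\hat\lambda_1 t^2-2F(x,t)\to+\infty$ as $t\to+\infty$ uniformly in $x$, hence (using $({\rm f}_1)$ on a bounded $t$-interval) this quantity is bounded below on $\Omega\times[0,+\infty)$; so the functions $g_n(x):=\frac12\left(\hat\lambda_1 w_n(x)^2-2F(x,w_n(x))\right)$ are uniformly bounded below while $g_n\to+\infty$ a.e., and Fatou's Lemma yields $\int_\Omega g_n\,dx\to+\infty$. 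Since ${\cal E}(w)\geq\hat\lambda_1\Vert w\Vert_2^2$ for every $w\in X$ by \eqref{lone},
$$M\geq\hat\psi(w_n)=\frac12{\cal E}(w_n)-\int_\Omega F(x,w_n)\,dx\geq\frac{\hat\lambda_1}{2}\Vert w_n\Vert_2^2-\int_\Omega F(x,w_n)\,dx=\int_\Omega g_n\,dx\to+\infty,$$
a contradiction, which establishes the assertion and hence the proposition.

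The hard part is the case $v\neq 0$ in the assertion: there $f$ is permitted to resonate with $\hat\lambda_1$ at $+\infty$, so the leading quadratic terms cancel and the conclusion must be extracted entirely from the Landesman--Lazer-type condition in $({\rm f}_2)$, which Lemma \ref{auxone} conveniently converts into the pointwise lower bound driving the Fatou step; identifying $v$ as a positive multiple of $\hat u_1$, so that $w_n\to+\infty$ pointwise, is the decisive intermediate step. By comparison, the splitting off of $u^+$ and the control of $\{u_n^-\}$ via Lemma \ref{hatab} are routine.
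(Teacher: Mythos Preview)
Your proof is correct and follows the same contradiction strategy as the paper: normalize, pass to a weak limit, show the limit must be a positive multiple of $\hat u_1$, and then invoke Lemma~\ref{auxone} together with Fatou's Lemma to reach a contradiction.

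Two tactical differences are worth recording. First, you split $\hat\varphi_+(u)=\hat\psi(u^+)+\tfrac12\big({\cal E}(u^-)+\hat a\Vert u^-\Vert_2^2\big)$ at the outset and dispose of the negative part via Lemma~\ref{hatab}; the paper instead keeps $u_n$ intact and only separates $v^+$ and $v^-$ after passing to the limit, which requires handling \eqref{bthree}--\eqref{vplus}. Your decomposition makes the reduction to nonnegative sequences transparent. Second, you obtain the crucial inequality ${\cal E}(v)\leq\hat\lambda_1\Vert v\Vert_2^2$ by the elementary estimate $F(x,\xi)\leq\tfrac12(\hat\lambda_1+\varepsilon)\xi^2+c_\varepsilon$ and sending $\varepsilon\to 0$, whereas the paper invokes the Dunford--Pettis Theorem and the argument of \cite[pp.~317--318]{MoMoPa} to produce a weight $\theta\leq\hat\lambda_1$ and then distinguishes $\theta\not\equiv\hat\lambda_1$ from $\theta\equiv\hat\lambda_1$. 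Your route is more self-contained; the paper's route, on the other hand, yields the weight $\theta$ explicitly, which is the same machinery reused in Proposition~\ref{cerami} and Lemma~\ref{cqphi}.
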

\begin{proof}
If the conclusion was false, we may construct a sequence $\{u_n\}\subseteq X$ such that $\Vert u_n\Vert\to+\infty$ but
\begin{equation}\label{bone}
\frac{1}{2}\left({\cal E}(u_n)+\hat a\Vert u_n\Vert_2^2\right)-\int_\Omega \hat F_+(x,u_n(x))\, dx\leq c_3\quad\forall\, n\in\mathbb{N}.
\end{equation}
Write $v_n:=\Vert u_n\Vert^{-1}u_n$. From $\Vert v_n\Vert=1$ it follows, along a subsequence when necessary, \eqref{weak}.
Moreover,  by \eqref{truncation}--\eqref{bone},
\begin{equation}\label{btwo}
\frac{1}{2}\left({\cal E}(v_n)+\hat a\Vert v_n^-\Vert_2^2\right)-\frac{1}{\Vert u_n\Vert^2}\int_\Omega F(x,u_n^+(x))\, dx\leq
\frac{c_3}{\Vert u_n\Vert^2},\quad n\in\mathbb{N}.
\end{equation}
Because of $({\rm f}_1)$ the sequence $\{\Vert u_n\Vert^{-2} N_F(u_n^+)\}\subseteq L^1(\Omega)$ is uniformly integrable. Via
the Dunford-Pettis Theorem and the same reasoning made in \cite[pp. 317--318]{MoMoPa} we thus get a function $\theta\in L^{\infty}(\Omega)$ such that $\theta\leq\hat\lambda_1$ and, up to subsequences,
$$\frac{1}{\Vert u_n\Vert^2} N_F(u_n^+)\rightharpoonup\frac{1}{2}\theta (v^+)^2\quad \mbox{in}\quad L^1(\Omega).$$
Using \eqref{btwo}, besides \eqref{weak}, this produces, as $n\to+\infty$,
\begin{equation}\label{bthree}
{\cal E}(v)+\hat a\Vert v^-\Vert_2^2-\int_\Omega\theta(x) v^+(x)^2dx\leq 0,
\end{equation}
whence, in view of Lemma \ref{hatab},
\begin{equation}\label{vplus}
{\cal E}(v^+)\leq\int_\Omega\theta(x)v^+(x)^2dx.
\end{equation}
If $\theta\not\equiv\hat\lambda_1$, then Lemma \ref{splitting} and \eqref{vplus} force $v^+=0$. From \eqref{bthree} and Lemma \ref{hatab} again it follows $v^-=0$, namely $v=0$. Since, like before,
$$\frac{1}{\Vert u_n\Vert^2} N_{\hat F_+}(u_n)\rightharpoonup 0\quad \mbox{in}\quad L^1(\Omega),$$
inequality \eqref{bone} combined with Lemma \ref{hatab} yield $\Vert v_n\Vert\to 0$. However, this is impossible. So, suppose $\theta=\hat\lambda_1$. Gathering \eqref{vplus} and $({\rm p}_4)$--$({\rm p}_6)$ together lead to $v^+=t\hat u_1$ for some $t\geq 0$. The above argument shows that $t>0$. Hence, $v^+>0$ in $\Omega$, which actually means $v=v^+>0$. Recalling the definition of $\{v_n\}$ we thus have $u_n(x)=u_n^+(x)$ as well as $u_n^+(x)\to+\infty$ for every $x\in\Omega$. Consequently, by Lemma \ref{auxone} and Fatou's Lemma,
\begin{equation}\label{abstwo}
\lim_{n\to+\infty}\int_\Omega[\hat\lambda_1u_n(x)^2-2F(x,u_n(x))] dx=+\infty.
\end{equation}
On the other hand, \eqref{truncation}--\eqref{bone}, besides \eqref{lone}, easily give rise to
$$\int_\Omega[\hat\lambda_1u_n(x)^2-2F(x,u_n(x))] dx\leq 2c_3\quad\forall\, n\in\mathbb{N},$$
against \eqref{abstwo}.
\end{proof}
\begin{theorem}\label{thmone}
Under \eqref{abeta}, $({\rm f}_1)$--$({\rm f}_4)$, and the assumption that, for appropriate $a_3>\hat\lambda_1$, 
\begin{equation}\label{useful}
2F(x,\xi)\geq a_3\xi^2\quad\mbox{in}\quad\Omega\times\R^-_0,
\end{equation}
Problem \eqref{prob} possesses at least two nontrivial solutions $u_0\in {\rm int}(C_+)$ and $u_1\in C^1(\overline{\Omega})$.
\end{theorem}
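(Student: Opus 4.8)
The plan is to realize $u_0$ as a global minimizer of the truncated functional $\hat\varphi_+$ and $u_1$ as a mountain‑pass critical point of $\varphi$, and then to invoke elliptic regularity. First I would observe that $\hat\varphi_+$ is weakly sequentially lower semi‑continuous: its quadratic part is $\tfrac12$ the square of the norm induced by $(\cdot,\cdot)$, while $u\mapsto\int_\Omega\hat F_+(x,u)\,dx$ is weakly continuous on $X$ because $X$ embeds compactly into $L^2(\Omega)$ and, by $({\rm f}_1)$, $\hat F_+$ has at most quadratic growth. Being coercive by Proposition \ref{coercive}, $\hat\varphi_+$ attains a global minimum at some $u_0\in X$. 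To see $u_0\neq 0$ I would test along $s\hat u_1$ with $s\downarrow 0$: since $\hat u_1\in{\rm int}(C_+)$ is $L^2$‑normalized one gets $\hat\varphi_+(s\hat u_1)=\tfrac{s^2}{2}\hat\lambda_1-\int_\Omega F(x,s\hat u_1)\,dx$, and $({\rm f}_4)$ (recall $a_1>\hat\lambda_2>\hat\lambda_1$) makes this negative for $s$ small, so $\inf_X\hat\varphi_+<0=\hat\varphi_+(0)$. Testing $\hat\varphi_+'(u_0)=0$ against $-u_0^-$ and using Lemma \ref{hatab} gives $u_0^-=0$; then $\hat f_+(\cdot,u_0)=f(\cdot,u_0)+\hat a u_0$ shows $\varphi'(u_0)=\hat\varphi_+'(u_0)=0$, so $u_0$ is a nontrivial nonnegative solution of \eqref{prob}, whence $u_0\in{\rm int}(C_+)$ by Lemma \ref{auxtwo}.

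The decisive step is that $u_0$ is also a local minimizer of $\varphi$. Since $u_0\in{\rm int}(C_+)$, every $u$ in a small $C^1(\overline{\Omega})$‑ball about $u_0$ is strictly positive, and there $\hat\varphi_+$ and $\varphi$ coincide; hence $u_0$ is a local minimizer of $\varphi$ in $C^1(\overline{\Omega})$, and the regularity theory of \cite{W}, through the equivalence of $C^1$‑ and $H^1$‑local minimizers for the present Robin operator, upgrades this to a local minimizer of $\varphi$ in the $X$‑topology. I may assume $K(\varphi)$ is finite, for otherwise any critical point distinct from $0$ and $u_0$ already supplies, after the regularity argument below, the second nontrivial solution; thus $u_0$ is an isolated local minimizer and there is $\rho>0$ with $m_\rho:=\inf\{\varphi(u):\Vert u-u_0\Vert=\rho\}>\varphi(u_0)$. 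On the opposite side, \eqref{useful} and $\Vert\hat u_1\Vert_2=1$ yield $\varphi(-t\hat u_1)\leq\tfrac{t^2}{2}(\hat\lambda_1-a_3)\to-\infty$, so I can fix $T>0$ with $\varphi(-T\hat u_1)<m_\rho$ and $\Vert T\hat u_1+u_0\Vert>\rho$.

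Since $\varphi\in C^1(X)$ satisfies Condition (C) by Proposition \ref{cerami}, the Mountain Pass Theorem with endpoints $u_0$ and $-T\hat u_1$ produces $u_1\in K(\varphi)$ with $\varphi(u_1)\geq m_\rho>\varphi(u_0)$ — so $u_1\neq u_0$ — and $C_1(\varphi,u_1)\neq 0$. To exclude $u_1=0$, I would compute the critical groups of $\varphi$ at the origin: by $({\rm f}_4)$ the map $u\mapsto\tfrac12{\cal E}(u)-\int_\Omega F(x,u)\,dx$ has, on a neighborhood of $0$ within $\bar H_2$, a strict maximum at $0$ (Lemma \ref{splitting} with $n=2$ and a weight $\theta$ just below $a_1$, using $a_1>\hat\lambda_2$), and since $\dim\bar H_2\geq 2$ a standard argument gives $C_q(\varphi,0)=0$ for $q\in\{0,1\}$; comparing with $C_1(\varphi,u_1)\neq 0$ forces $u_1\neq 0$. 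Finally $({\rm f}_1)$ makes $N_f$ of linear growth, so $b(x):=f(x,u_1(x))/u_1(x)$ (set to $0$ where $u_1=0$) lies in $L^\infty(\Omega)$ and the Moser iteration together with \cite[Lemmas 5.1 and 5.2]{W}, exactly as in the proof of Lemma \ref{auxtwo}, gives $u_1\in C^{1,\alpha}(\overline{\Omega})\subset C^1(\overline{\Omega})$. I expect the main obstacle to be precisely the transfer of minimality in the second step: passing from ``$u_0$ minimizes $\hat\varphi_+$'' (equivalently, minimizes $\varphi$ in the $C^1$‑topology) to ``$u_0$ is a local minimizer of $\varphi$ in $H^1(\Omega)$'' relies on the $C^1$‑versus‑$H^1$ local‑minimizer equivalence for $-\Delta+a(x)$ with $a\in L^s(\Omega)$, $s>N$, and it is this that makes $u_0$ a genuine ``valley'' for the mountain‑pass geometry of $\varphi$.
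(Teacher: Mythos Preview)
Your construction of $u_0$ and the mountain-pass geometry matches the paper exactly. The gap is in the step ``$u_1\neq 0$''. You assert that under $({\rm f}_4)$ alone a ``standard argument'' gives $C_q(\varphi,0)=0$ for $q\in\{0,1\}$, based only on the observation that $\varphi$ has a strict local maximum at $0$ on $\bar H_2$. This implication is not standard for a merely $C^1$ functional: strict local maximality on a $k$-dimensional subspace yields, via local linking, that $C_k(\varphi,0)\neq 0$, but it does \emph{not} by itself force the lower critical groups to vanish. The paper's Lemma~\ref{cqphi} computes $C_q(\varphi,0)$ only under $({\rm f}_5)$ or $({\rm f}_6)$, where $a_1,a_2$ are trapped between \emph{consecutive} eigenvalues; this is what permits the homotopy to a nondegenerate quadratic without creating spurious critical points. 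Under $({\rm f}_4)$ the upper bound $a_2$ is unrestricted, the interval $[a_1,a_2]$ may contain many eigenvalues, and that homotopy argument breaks down. Notice also that your route barely touches \eqref{useful} --- you use it only for $\varphi(-T\hat u_1)\to-\infty$, which already follows from $({\rm f}_3)$ --- so if your argument worked it would prove the theorem without \eqref{useful}, making the hypothesis superfluous.

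The paper avoids critical groups at $0$ entirely and instead proves $\varphi(u_1)<0$ directly, by exhibiting a path $\tilde\gamma\in\Gamma$ with $\varphi|_{\tilde\gamma}<0$. This path is built by concatenation: a middle piece $\eta\gamma_\ep$ obtained from the min-max characterization $({\rm p}_8)$ of $\hat\lambda_2$ (so $\max_t{\cal E}(\gamma_\ep(t))<\hat\lambda_2+\ep$), scaled into the region where $({\rm f}_4)$ controls $F$; a right piece from $\eta\hat u_1$ to $u_0$ supplied by the Second Deformation Lemma applied to $\hat\varphi_+$; and a left piece $\gamma_-(t)=-[t\eta+(1-t)\tau]\hat u_1$ joining $-\tau\hat u_1$ to $-\eta\hat u_1$, on which \eqref{useful} gives $\varphi(\gamma_-(t))\leq\tfrac12[t\eta+(1-t)\tau]^2(\hat\lambda_1-a_3)<0$. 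It is precisely this last segment that makes \eqref{useful} indispensable in Theorem~\ref{thmone}.
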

\begin{proof}
A standard argument, which exploits the Sobolev Embedding Theorem and the compactness of the trace operator, ensures that $\hat\varphi_+$ is weakly sequentially lower semi-continuous. Since, due to Proposition \ref{coercive}, it is coercive, we have
\begin{equation}\label{defuzero}
\inf_{u\in X}\hat\varphi_+(u)=\hat\varphi_+(u_0)
\end{equation}
for some $u_0\in X$. Fix $\ep>0$. Assumption  $({\rm f}_4)$ yields $\delta>0$ small such that 
\begin{equation}\label{defdelta}
F(x,\xi)\geq\frac{a_1(x)-\ep}{2}\xi^2\quad\forall\, (x,\xi)\in\Omega\times[-\delta,\delta].
\end{equation}
If $\tau\in (0,1)$ complies with $\tau\hat u_1\leq\delta$, then by \eqref{truncation}, the above inequality, $({\rm p}_6)$, and $({\rm f}_4)$,
\begin{eqnarray*}
\hat\varphi_+(\tau\hat u_1)\leq\frac{\tau^2}{2}\left({\cal E}(\hat u_1)-\int_\Omega(a_1-\ep)\hat u_1^2\, dx\right)
=\frac{\tau^2}{2}\left(\hat\lambda_1\Vert\hat u_1\Vert_2^2-\int_\Omega(a_1-\ep)\hat u_1^2\, dx\right)\\
=\frac{\tau^2}{2}\left(\int_\Omega(\hat\lambda_1-a_1)\hat u^2_1\, dx+\ep\right)<0
\end{eqnarray*}
as soon as $\ep<\int_\Omega(a_1-\hat\lambda_1)\hat u_1^2\, dx$. Hence,
\begin{equation*}
\hat\varphi_+(u_0)<0=\hat\varphi_+(0),
\end{equation*}
which clearly means $u_0\neq 0$. Now, through (\ref{defuzero}) we get $\hat\varphi_+'(u_0)=0$, namely
\begin{equation}\label{fplus}
\int_\Omega\nabla u_0\cdot\nabla v\, dx+\int_\Omega(a+\hat a)u_0v\, dx+\int_{\partial\Omega}\beta u_0v\, d\sigma=
\int_\Omega[f(x,u_0^+)+\hat au_0^+]v\, dx,\quad v\in X.
\end{equation}
Using Lemma \ref{hatab} and \eqref{fplus} written for $v:=-u_0^-$ produces
$$\hat b\Vert u_0^-\Vert^2\leq {\cal E}(u_0^-)+\hat a\Vert u_0^-\Vert_2^2=-\int_{\partial\Omega}\beta(x)u_0^-(x)^2d\sigma
\leq 0,$$
whence $u_0\geq 0$. Therefore, $u_0$ is a nontrivial nonnegative solution to \eqref{prob}, because \eqref{fplus} becomes
$$\int_\Omega\nabla u_0\cdot\nabla v\, dx+\int_\Omega au_0v\, dx+\int_{\partial\Omega}\beta u_0v\, d\sigma=
\int_\Omega f(x,u_0)v\, dx\quad\forall\, v\in X.$$
By Lemma \ref{auxtwo} one has $u_0\in{\rm int}(C_+)$ while \eqref{truncation} forces $\varphi|_{C_+}=\hat\varphi_+|_{C_+}$. Thus, \cite[Proposition 3]{PaRa} ensures that $u_0$ turns out to be a local minimizer for $\varphi$. We may evidently assume $u_0$
isolated in $K(\varphi)$, otherwise infinitely many solutions there would exist. The same reasoning made in the proof of \cite[Proposition 29]{APS2} provides here $\rho>0$ fulfilling
\begin{equation}\label{crho}
\varphi(u_0)<c_\rho:=\inf_{u\in\partial B_\rho(u_0)}\varphi(u).
\end{equation}
From \eqref{rminus} it easily follows
$$\lim_{\tau\to-\infty}\varphi(\tau\hat u_1)=-\infty.$$
Thanks to Proposition \ref{cerami}, Condition (C) holds true for $\varphi$. Consequently, the Mountain Pass Theorem gives $u_1\in X\setminus\{u_0\}$ such that $\varphi'(u_1)=0$ and
\begin{equation}\label{defw}
c_\rho\leq\varphi(u_1)= \inf_{\gamma\in\Gamma}\max_{t\in[0,1]}\varphi(\gamma(t)),
\end{equation}
where
$$\Gamma:=\{\gamma\in C^0([0,1],X):\;\gamma(0)=-\tau\hat u_1,\;\gamma(1)=u_0\}$$
with sufficiently large $\tau>0$.  Obviously, $u_1$ solves (\ref{prob}). Through the regularity arguments exploited in the proof of Lemma \ref{auxtwo} we achieve $u_1\in C^1(\overline{\Omega})$. Thus, the only thing to check is that $u_1\neq 0$. This will be a consequence of the inequality
\begin{equation}\label{wminore}
\varphi(u_1)<0\, ,
\end{equation}
which, due to (\ref{defw}), derives from the claim below.  
\begin{equation}\label{path}
\mbox{There exists a path $\tilde\gamma \in\Gamma$ such that $\varphi(\tilde\gamma(t))<0$ for all $t\in [0,1]$.}
\end{equation}
Pick $\ep>0$ and choose $\delta>0$ as in \eqref{defdelta}. Combining (${\rm p}_8$) with \cite[Lemma 2.1]{MaPaRobin} entails
\begin{equation}\label{smalleta}
\max_{t\in [-1,1]}{\cal E}(\gamma_\ep(t))<\hat\lambda_2+\ep
\end{equation}
for appropriate $\gamma_\ep\in C^0([-1,1],C^1(\overline{\Omega}))\cap\Gamma_1$. Since $\gamma_\ep([-1,1])$ is compact in $C^1(\overline{\Omega})$ while $\tau\hat u_1,u_0 \in{\rm int}(C_+)$ we can find $\eta>0$ so small that
$$-\tau\hat u_1(x)\leq\eta\gamma_\ep(t)(x)\leq u_0(x),\quad|\eta\gamma_\ep (t)(x)|\leq\delta$$
whenever $x\in\Omega$, $t\in[-1,1]$. On account of \eqref{defdelta}, \eqref{smalleta}, and the equality $\Vert\gamma_\ep(t)\Vert_2=1$ one has
\begin{eqnarray*}
\varphi(\eta\gamma_\ep(t))=\frac{\eta^2}{2}{\cal E}(\gamma_\ep(t))-\int_\Omega F(x,\eta\gamma_\ep(t)(x))\, dx\\
\phantom{}\\
<\frac{\eta^2}{2}\left(\hat\lambda_2+\ep+\int_{\Omega}(\ep-a_1)|\gamma_\ep(t)|^2dx\right)
<\frac{\eta^2}{2}\left(\int_\Omega(\hat\lambda_2-a_1)|\gamma_\ep(t)|^2dx+2\ep\right)<0
\end{eqnarray*}
provided $\ep<2^{-1}\int_\Omega(a_1-\hat\lambda_2)|\gamma_\ep(t)|^2dx$. Consequently,
\begin{equation}\label{middle}
\varphi|_{\eta\gamma_\ep([-1,1])}<0.
\end{equation}
Next, write $a:=\hat\varphi_+(u_0)$. It is evident that $a<0$. Further, we may suppose
$$K(\hat\varphi_+)=\{0,u_0\},$$
otherwise the conclusion would be straightforward. Hence, no critical value of $\hat\varphi_+$ lies
in $(a,0)$ while
$$K_a(\hat\varphi_+) =\{ u_0\}.$$
Due to the Second Deformation Lemma \cite[Theorem 5.1.33]{GaPaNA}, there exists a continuous function $h:[0,1]\times (\hat\varphi_+^0\setminus\{0\})\to\hat\varphi_+^0$ satisfying
$$h(0,u)=u\, ,\quad h(1,u)=u_0\, ,\quad\mbox{and}\quad\hat\varphi_+(h(t,u))\leq\hat\varphi_+(u)$$
for all $(t,u)\in [0,1]\times(\hat\varphi_+^0\setminus\{0\})$. Let $\gamma_+(t):=h(t,\eta\hat u_1)^+$, $t\in [0,1]$. Then $\gamma_+(0)=\eta\hat u_1$, $\gamma_+(1)=u_0$, as well as
\begin{equation}\label{gammapiu}
\varphi(\gamma_+(t))=\hat\varphi_+(\gamma_+(t))\leq\hat\varphi_+(h(t,\eta\hat u_1)) \leq\hat\varphi_+(\eta\hat u_1)=\varphi(\eta\gamma_\ep(1))<0;
\end{equation}
cf. \eqref{middle} besides \eqref{truncation} and Lemma \ref{hatab}.  Finally, define
$$\gamma_-(t):=-[t\eta+(1-t)\tau]\hat u_1\quad\forall\, t\in [0,1].$$
Since through \eqref{useful} we obtain
\begin{equation}\label{gammameno}
\varphi(\gamma_-(t))\leq\frac{1}{2}\left({\cal E}(\gamma_-(t))-a_3\Vert\gamma_-(t)\Vert_2^2\right)
=\frac{1}{2}[t\eta+(1-t)\tau]^2(\hat\lambda_1-a_3)<0,
\end{equation}
concatenating $\gamma_-$, $\eta\gamma_\ep$, and $\gamma_+$ produces a path $\tilde\gamma \in\Gamma$ which, in view of (\ref{middle})--(\ref{gammameno}), fulfils (\ref{path}).
\end{proof}
A variant of  Theorem \ref{thmone} that does not change the overall problem's geometry is the one below, where
\begin{itemize}
\item[$({\rm f}_5)$] \textit{There exist $m\geq\max\{n_0,2\}$ as well as  $a_1,a_2\in L^\infty(\Omega)\setminus\{\hat\lambda_m,\hat\lambda_{m+1}\}$ such that $\hat\lambda_m\leq a_1\leq a_2\leq\hat\lambda_{m+1}$ and
$$a_1(x)\leq\liminf_{t\to 0}\frac{f(x,t)}{t}\leq\limsup_{t\to 0}\frac{f(x,t)}{t}\leq a_2(x)$$
uniformly in $x\in\Omega$.}
\item[$({\rm f}_6)$] \textit{$f(x,\cdot)$ is differentiable at zero and $f'_t(x,0)=\displaystyle{\lim_{t\to 0}\frac{f(x,t)}{t}}$ uniformly with respect to $x\in\Omega$. Moreover, for appropriate $m\geq \max\{n_0,2\}$ and $a_2\in L^\infty(\Omega)\setminus\{\hat \lambda_{m+1}\}$ one has
$$\hat\lambda_m\leq f'_t(\cdot,0)\leq a_2\leq\hat\lambda_{m+1}.$$ }
\end{itemize}

\begin{lemma}\label{cqphi}
If $({\rm f}_1)$, $({\rm f}_2)$, $({\rm f}_3)$, and either $({\rm f}_5)$ or  $({\rm f}_6)$ hold true then $C_q(\varphi,0)=\delta_{q,d_m}\mathbb{Z}$ for all $q\in\mathbb{N}_0$, where $d_m:={\rm dim}(\bar H_m)$.
\end{lemma}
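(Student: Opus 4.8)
The statement computes the critical groups of the energy functional $\varphi$ at the origin, and the target index $d_m=\dim(\bar H_m)$ strongly suggests that $0$ is a (possibly degenerate) critical point whose local behaviour is governed by the quadratic form ${\cal E}(u)-\int_\Omega a_2 u^2\,dx$ — or, under $({\rm f}_6)$, by the exact Hessian ${\cal E}(u)-\int_\Omega f'_t(x,0)u^2\,dx$. The cleanest route is a homotopy/linking computation via the splitting $X=\bar H_m\oplus\hat H_{m+1}$ afforded by $({\rm p}_4)$. First I would record that, under $({\rm f}_4)$-type behaviour at zero (here $({\rm f}_5)$ or $({\rm f}_6)$), the primitive $F$ satisfies a two-sided estimate $\tfrac12 a_1(x)t^2-c|t|^{2+\mu}\le F(x,t)\le \tfrac12 a_2(x)t^2+c|t|^{2+\mu}$ for $|t|$ small, for suitable $\mu\in(0,2^*-2)$, by combining the limit condition with $({\rm f}_1)$.

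**Key steps.** (i) \emph{Negative/concave directions on $\bar H_m$.} For $u\in\bar H_m$ with $\|u\|$ small, use the lower bound on $F$ together with Lemma~\ref{splitting} applied to $\theta=a_1\ge\hat\lambda_m$, $\theta\neq\hat\lambda_m$ (legitimate since $a_1\notin\{\hat\lambda_m\}$, resp. $f'_t(\cdot,0)$ is not identically $\hat\lambda_m$ on all of $\bar H_m$ — this needs a word), to get $\varphi(u)\le \tfrac12\big({\cal E}(u)-\int a_1 u^2\big)+c\|u\|^{2+\mu}\le -\tfrac{\bar c}2\|u\|^2+c\|u\|^{2+\mu}<0$ for $0<\|u\|\le r$. (ii) \emph{Positivity transverse to $\bar H_m$.} For $u\in\hat H_{m+1}$ with $\|u\|$ small, use the upper bound on $F$ and the second half of Lemma~\ref{splitting} with $\theta=a_2\le\hat\lambda_{m+1}$, $\theta\neq\hat\lambda_{m+1}$, to get $\varphi(u)\ge \tfrac{\hat c}2\|u\|^2-c\|u\|^{2+\mu}>0$ for $0<\|u\|\le r$; more generally, a deformation argument shows $\varphi$ has a local linking at $0$ with respect to this splitting. (iii) \emph{Conclusion via local linking.} Invoke the standard result (e.g. \cite[Proposition 2.3]{MoMoPa}-style, or the local-linking theorems of Su/Liu) that a $C^1$ functional with a local linking at an isolated critical point $0$ with $\dim\bar H_m=d_m<\infty$ satisfies $C_{d_m}(\varphi,0)\neq 0$; since the relevant abstract theorem in the Morse-theory monograph gives exactly $C_q(\varphi,0)=\delta_{q,d_m}\mathbb Z$ when the "positive" subspace has finite codimension and the functional is bounded below on it near $0$ after shrinking, we are done. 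For case $({\rm f}_6)$ one can alternatively argue more directly: $\varphi$ is $C^2$ near $0$, $\varphi''(0)u=-\Delta u+au-f'_t(x,0)u$ with $0$ possibly in the spectrum, so $0$ may be degenerate; but a shifting-type argument (reduction to the kernel) plus the sign information above again pins down the critical groups to $\delta_{q,d_m}\mathbb Z$, and if in addition $f'_t(\cdot,0)\neq \hat\lambda_{m}$ and the form is nondegenerate one may simply cite \eqref{kd}.

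**Main obstacle.** The delicate point is handling the \emph{resonant/degenerate} situations allowed by $({\rm f}_5)$ and $({\rm f}_6)$: when $a_1$ or $a_2$ touches an eigenvalue on part of $\Omega$ (but, crucially, is \emph{not} identically equal to it, so $({\rm p}_9)$-type strict monotonicity and $({\rm p}_2)$ save us), Lemma~\ref{splitting} still applies but the remainder terms $\|u\|^{2+\mu}$ must be genuinely controlled uniformly over the finite-dimensional pieces, and one must verify that $0$ is isolated in $K(\varphi)$ (else, as in the proof of Theorem~\ref{thmone}, infinitely many solutions appear and there is nothing to prove). I expect the bulk of the write-up to be the careful statement of the local-linking geometry — the two inequalities in (i)–(ii) with explicit radii — after which the critical-group computation is a citation. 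The estimate $f(x,t)=f'_t(x,0)t+o(|t|)$ uniformly, upgraded to the Hölder remainder via $({\rm f}_1)$ and interpolation, is the one routine computation I would actually carry out.
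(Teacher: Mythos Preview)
Your plan for $({\rm f}_5)$ is close in spirit but diverges from the paper at the crucial step (iii), and your treatment of $({\rm f}_6)$ has a genuine gap.

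\textbf{Under $({\rm f}_5)$.} The paper does \emph{not} invoke an abstract local-linking theorem for $\varphi$. Instead it fixes any $\theta$ with $a_1\le\theta\le a_2$, sets $\psi(u)=\tfrac12\big(\mathcal E(u)-\int_\Omega\theta u^2\big)$, observes (via Lemma~\ref{splitting}) that $0$ is a \emph{nondegenerate} critical point of $\psi$ with Morse index $d_m$, hence $C_q(\psi,0)=\delta_{q,d_m}\mathbb Z$, and then runs the homotopy $h(t,u)=(1-t)\varphi(u)+t\psi(u)$. The substantive work is showing that $h'_u(t_n,\cdot)$ has no zeros on small spheres: one rescales $v_n=u_n/\|u_n\|$, passes to a limit, and lands on a weighted eigenvalue problem with weight $\theta_t\in[a_1,a_2]$, which $({\rm p}_9)$ rules out. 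Your estimates (i)--(ii) are correct and do yield a strict local linking, but the citation you reach for in (iii) does not exist in the form you state: Liu-type local linking gives only $C_{d_m}(\varphi,0)\neq 0$, not the full computation. To upgrade you would still need a homotopy to a model, and since under $({\rm f}_5)$ the limit $\lim_{t\to 0}f(x,t)/t$ need \emph{not} exist (only the $\liminf/\limsup$ are controlled), you cannot write $\varphi=\psi+o(\|u\|^2)$ for a fixed $\psi$; the compactness argument with $({\rm p}_9)$ is precisely what fills this hole.

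\textbf{Under $({\rm f}_6)$.} Two problems. First, $({\rm f}_6)$ asserts differentiability of $f(x,\cdot)$ only \emph{at the single point} $t=0$, so $\varphi$ is not $C^2$ in any neighbourhood and neither \eqref{kd} nor a shifting/Gromoll--Meyer reduction is available. Second, $({\rm f}_6)$ allows $f'_t(\cdot,0)\equiv\hat\lambda_m$, in which case your strict inequality (i) fails on $E(\hat\lambda_m)$ (you flagged this but did not resolve it). The paper's route is different: it works with the quadratic $\psi(u)=\tfrac12\big(\mathcal E(u)-\int_\Omega f'_t(x,0)u^2\big)$, for which $\psi\le 0$ on $\bar H_m$ and $\psi>0$ on $\hat H_{m+1}\setminus\{0\}$ (the latter via Lemma~\ref{splitting} with $\theta=a_2\ne\hat\lambda_{m+1}$), and applies Su's Proposition~2.3 to get $C_q(\psi,0)=\delta_{q,d_m}\mathbb Z$. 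Then, using $|f(x,t)-f'_t(x,0)t|\le\varepsilon|t|$ for $|t|\le\delta$, it shows $\|\varphi-\psi\|_{C^1(D_\rho)}\le\varepsilon$ on small $C^1$-balls, transfers the critical groups via Chang's $C^1$-continuity result on $C^1(\overline\Omega)$, and finally passes back to $X=H^1(\Omega)$ by Palais' density theorem. None of these three ingredients appears in your sketch.
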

\begin{proof}
\textit{1) Under Condition $({\rm f}_5)$.}\\
Pick any $\theta\in L^\infty(\Omega)$ satisfying $a_1\leq\theta\leq a_2$ and set
$$\psi(u):=\frac{1}{2}\left({\cal E}(u)-\int_\Omega\theta(x)u(x)^2dx\right),\quad u\in X.$$
Thanks to $({\rm f}_5)$, Lemma \ref{splitting} can be applied. So, $u=0$ is a non-degenerate critical point of $\psi$ with Morse index $d_m$, which forces
\begin{equation}\label{Cqpsi}
C_q(\psi,0)=\delta_{q,d_m}\mathbb{Z}\quad\forall\, q\in\mathbb{N}_0;
\end{equation}
see \eqref{kd}. Now, let $h:[0,1]\times X\to\R$ given by
$$h(t,u):=(1-t)\varphi(u)+t\psi(u),\quad (t,u)\in[0,1]\times X.$$
We shall prove that there exists $r>0$ such that $0\not\in h([0,1]\times\partial B_r)$. In fact, if not, one might construct two sequences $\{t_n\}\subseteq [0,1]$ and $\{u_n\}\subseteq X\setminus\{0\}$ with the properties
$$t_n\to t\in [0,1],\quad u_n\to 0\mbox{ in } X,\quad h'_u(t_n,u_n)=0\;\;\forall\, n\in\mathbb{N}.$$
Consequently, letting $v_n:=\Vert u_n\Vert^{-1}u_n$ we have
\begin{equation}\label{avnw}
\int_\Omega\left(\nabla v_n\cdot\nabla w+av_nw\right)dx+\int_{\partial\Omega}\beta v_nwd\sigma=\int_\Omega \left[(1-t_n)\frac{f(x,u_n)}{\Vert u_n\Vert}+t_n\theta v_n\right]wdx
\end{equation}
whatever $w\in X$ as well as \eqref{weak}. Because of $({\rm f}_1)$ the sequence $\{\Vert u_n\Vert^{-1} N_f(u_n)\}\subseteq L^2(\Omega)$ is bounded. The same reasoning made in \cite[pp. 317--318]{MoMoPa} produces a function $\hat\theta\in L^{\infty}(\Omega)$ such that $a_1\leq\hat\theta\leq a_2$ and
\begin{equation}\label{hattheta}
\frac{1}{\Vert u_n\Vert} N_f(u_n)\rightharpoonup\hat\theta v\quad \mbox{in}\quad L^2(\Omega).
\end{equation}
Thanks to \eqref{avnw} besides \eqref{weak} we get, as $n\to+\infty$,
\begin{equation*}
\int_\Omega\left(\nabla v\cdot\nabla w+avw\right)dx+\int_{\partial\Omega}\beta vw\, d\sigma=\int_\Omega[(1-t)\hat\theta +t\theta]vw\, dx,\quad w\in X,
\end{equation*}
namely $v$ is a weak solution of the problem
$$-\Delta u+a(x)u=\theta_t(x) u\quad\mbox{in}\quad\Omega,\quad\frac{\partial u}{\partial n}+\beta(x) u=0\quad\mbox{on}\quad\partial\Omega,$$
where $\theta_t(x):=(1-t)\hat\theta(x)+t\theta(x)$. From $a_1\leq\theta_t\leq a_2$, assumption $({\rm f}_5)$, and  $({\rm p}_9)$ it follows
$$\hat\lambda_m(\theta_t)<\hat\lambda_m(\hat\lambda_m)=1=\hat\lambda_{m+1}(\hat\lambda_{m+1})<\hat\lambda_{m+1}(\theta_t),$$
whence $v=0$. Through \eqref{weak}, \eqref{hattheta}, and \eqref{avnw} written for $w:=v_n-v$  we easily infer that $v_n\to 0$ in $X$, but this is impossible, because $\Vert v_n\Vert\equiv 1$. Finally, combining the homotopy invariance property of critical  groups with \eqref{Cqpsi} completes the proof.

\textit{2) Under Condition $({\rm f}_6)$.}\\
Define, like before,
$$\psi(u):=\frac{1}{2}\left({\cal E}(u)-\int_\Omega f'_t(x,0)u(x)^2dx\right),\quad u\in X.$$
Thanks to $({\rm f}_6)$ and $({\rm p}_4)$ one has $\psi(u)\leq 0$ for all $u\in\bar H_m$. If $\hat c>0$ is furnished by Lemma \ref{splitting} for $n:=m+1$ then
\begin{equation*}
\psi(u)\geq\frac{1}{2}\left({\cal E}(u)-\int_\Omega a_2(x)u(x)^2dx\right)\geq\frac{\hat c}{2}\Vert u\Vert^2>0\quad\forall\, u\in\hat H_{m+1}\setminus\{0\},
\end{equation*}
since $f'_t(x,0)t^2\leq a_2(x)t^2$ in $\Omega\times\R$. Now, Proposition 2.3 of \cite{Su} ensures that \eqref{Cqpsi} holds.  Due to  $({\rm f}_6)$ again, given any $\ep>0$ we can find $\delta>0$ fulfilling
\begin{equation*}
|f(x,t)-f'_t(x,0)t|\leq\ep|t|,\quad (x,t)\in\Omega\times[-\delta,\delta].
\end{equation*}
This entails
$$|\varphi(u)-\psi(u)|\leq\int_\Omega\left(\int_0^{|u(x)|}|f(x,t)-f'_t(x,0)t|d|t|\right)dx\leq\frac{\ep}{2}\Vert u\Vert_2^2$$
as well as
$$|\langle\varphi'(u)-\psi'(u),v\rangle|\leq\int_\Omega|f(x,u)-f'_t(x,u)u||v|dx\leq\ep\Vert u\Vert_2\Vert v\Vert_2\quad\forall\, v\in X$$
provided $|u(x)|\leq\delta$. Consequently, to every $\ep>0$ there corresponds  $\rho>0$ such that
$$\Vert\varphi-\psi\Vert_{C^1(D_\rho)}\leq\ep,$$
where $D_\rho:=\{u\in C^1(\overline{\Omega}):\Vert u\Vert_{C^1(\overline{\Omega})}\leq\rho\}$. Corollary 5.1.25 of \cite{ChMNA} thus yields
\begin{equation*}
C_q(\varphi|_{C^1(\overline{\Omega})},0)=C_q(\psi|_{C^1(\overline{\Omega})},0),\quad q\in\mathbb{N}_0,
\end{equation*}
which actually means $C_q(\varphi,0)=C_q(\psi,0)$, because $C^1(\overline{\Omega})$ is dense in $X$; see, e.g., \cite{Palais}. Now the conclusion directly follows from \eqref{Cqpsi}.
\end{proof}
\begin{theorem}\label{thmtwo}
Let \eqref{abeta}, $({\rm f}_1)$--$({\rm f}_3)$, and either $({\rm f}_5)$ or $({\rm f}_6)$ be satisfied. Then the same conclusion of Theorem \ref{thmone} holds.
\end{theorem}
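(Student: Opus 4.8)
The plan is to run the scheme of Theorem \ref{thmone} once more, the only change being that the explicit ``negative path'' construction there --- which rested on the sharp bound $a_1>\hat\lambda_2$ in $({\rm f}_4)$ and on \eqref{useful} --- is now replaced by the computation of the critical groups $C_q(\varphi,0)$ supplied by Lemma \ref{cqphi}.

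First I would record that $({\rm f}_1)$ together with $({\rm f}_5)$ (respectively $({\rm f}_6)$) still yields $c_0>0$ with $|f(x,t)|\leq c_0|t|$ on $\Omega\times\R$, so the regularity arguments of Lemmas \ref{auxtwo} and \ref{thmone} stay available verbatim. Since $({\rm f}_1)$--$({\rm f}_2)$ hold, Proposition \ref{coercive} makes $\hat\varphi_+$ coercive; being also weakly sequentially lower semi-continuous, it attains its infimum at some $u_0\in X$. As $m\geq\max\{n_0,2\}$ forces $\hat\lambda_m\geq\hat\lambda_2>\hat\lambda_1$, the prescribed behaviour of $f$ near $0$ gives, for suitably small $\delta,\ep>0$, the bound $F(x,\xi)\geq\tfrac12(\hat\lambda_m-\ep)\xi^2$ whenever $|\xi|\leq\delta$; testing $\hat\varphi_+$ along $\tau\hat u_1$ with small $\tau>0$ then produces $\hat\varphi_+(u_0)\leq\hat\varphi_+(\tau\hat u_1)\leq\tfrac{\tau^2}{2}(\hat\lambda_1-\hat\lambda_m+\ep)<0=\hat\varphi_+(0)$, whence $u_0\neq0$. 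Testing $\hat\varphi_+'(u_0)=0$ against $-u_0^-$ and invoking Lemma \ref{hatab} forces $u_0\geq0$, so $u_0$ solves \eqref{prob}, $u_0\in{\rm int}(C_+)$, and --- using $\varphi|_{C_+}=\hat\varphi_+|_{C_+}$ with \cite[Proposition 3]{PaRa} --- $u_0$ is a local minimizer of $\varphi$ on $X$. We may assume $u_0$ isolated in $K(\varphi)$ (otherwise \eqref{prob} has infinitely many solutions), so $C_q(\varphi,u_0)=\delta_{q,0}\mathbb{Z}$ for all $q\in\mathbb{N}_0$.

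Next I would obtain the second solution by a mountain pass argument, exactly as in Theorem \ref{thmone}: $\lim_{\tau\to-\infty}\varphi(\tau\hat u_1)=-\infty$ follows from \eqref{rminus} and $({\rm f}_3)$, while the reasoning of \cite[Proposition 29]{APS2} yields $\rho>0$ with $\varphi(u_0)<c_\rho:=\inf_{\partial B_\rho(u_0)}\varphi$. Choosing $\tau>0$ so large that $\varphi(-\tau\hat u_1)<c_\rho$ and $\Vert{-\tau\hat u_1}-u_0\Vert>\rho$, the functional $\varphi$ --- which satisfies Condition (C) by Proposition \ref{cerami} --- has a mountain pass geometry relative to $\{-\tau\hat u_1,u_0\}$. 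Hence there is $u_1\in K(\varphi)$ with $\varphi(u_1)\geq c_\rho>\varphi(u_0)$, which, being a mountain pass point, may be taken so that $C_1(\varphi,u_1)\neq0$ (if $K(\varphi)$ is infinite, we are already done). On the other hand Lemma \ref{cqphi} gives $C_q(\varphi,0)=\delta_{q,d_m}\mathbb{Z}$ with $d_m=\dim(\bar H_m)\geq2$, so $C_1(\varphi,0)=0$, and likewise $C_1(\varphi,u_0)=0$. Therefore $u_1\notin\{0,u_0\}$; it is thus a nontrivial solution of \eqref{prob}, and the regularity argument used for $u_0$ gives $u_1\in C^1(\overline{\Omega})$, which completes the proof.

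The only genuine departure from Theorem \ref{thmone} is the step $u_1\neq0$: it can no longer be read off from the sign of $\varphi$, since the weaker near-zero control of $({\rm f}_5)$--$({\rm f}_6)$ does not make $0$ a downhill point and the mountain pass level need not be negative. Instead it comes from Morse theory through $C_1(\varphi,0)=0\neq C_1(\varphi,u_1)$, which is exactly what Lemma \ref{cqphi} was designed to provide. I expect the only delicate bookkeeping to be the standard fact that a mountain pass critical point carries a nontrivial first critical group when the critical set at that level is discrete (see, e.g., \cite{MoMoPa}).
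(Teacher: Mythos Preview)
Your proposal is correct and follows essentially the same route as the paper: obtain $u_0\in{\rm int}(C_+)$ and the mountain pass point $u_1$ by rerunning the scheme of Theorem~\ref{thmone}, and then distinguish $u_1$ from $0$ via $C_1(\varphi,u_1)\neq 0$ (\cite[Proposition 6.100]{MoMoPa}) together with $C_1(\varphi,0)=0$ from Lemma~\ref{cqphi} (since $d_m\geq 2$). The paper's own proof is terser but identical in substance; your added detail (checking $\hat\varphi_+(\tau\hat u_1)<0$ under $({\rm f}_5)$/$({\rm f}_6)$ via $\hat\lambda_m>\hat\lambda_1$, and recording $C_q(\varphi,u_0)=\delta_{q,0}\mathbb{Z}$) is accurate and harmless.
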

\begin{proof}
An argument analogous to that employed in showing Theorem \ref{thmone} provides here two solutions, $u_0\in{\rm int}(C_+)$ and $u_1\in C^1(\overline{\Omega})$. So, it remains to see whether $u_1\neq 0$. By \cite[Proposition 6.100]{MoMoPa} we have $C_1(\varphi,u_1)\neq 0$ while Lemma \ref{cqphi} entails $C_1(\varphi,0)=\delta_{1,d_m}\mathbb{Z}$. Since $d_m\geq 2$, the function $u_1$ cannot be trivial.
\end{proof}
\begin{remark}
 Although $({\rm f}_5)$ and $({\rm f}_6)$ look less general than $({\rm f}_4)$, inequality \eqref{useful} is not taken on. 
\end{remark}
The next variant of Theorem \ref{thmone} exhibits a different geometry at zero. Indeed, instead of $({\rm f}_4)$, $({\rm f}_5)$, or $({\rm f}_6)$, we shall suppose
\begin{itemize}
\item[$({\rm f}_7)$] \textit{There exist $a_4>0$, $q\in(0,2)$, and $\delta>0$ such that
$$a_4|t|^q\leq f(x,t)t\leq qF(x,t)\quad\forall\, (x,t)\in\Omega\times[-\delta,\delta].$$
}
\end{itemize}
Condition $({\rm f}_7)$ allows to get further information on the critical groups of $\varphi$ at zero. This has previously been pointed out in \cite{Mo} concerning a different problem; cf. also \cite{MaMoPa}.
\begin{lemma}\label{Cqzero}
Under \eqref{abeta}, $({\rm f}_1)$, $({\rm f}_7)$, and the assumption that zero is an isolated critical point of $\varphi$, one has $C_q(\varphi, 0)=0$ for all $q\in\mathbb{N}_0$.
\end{lemma}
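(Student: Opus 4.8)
The plan is to show that $\varphi$ satisfies a ``local linking at zero of infinite type'' phenomenon, which under $({\rm f}_7)$ forces all critical groups at the origin to vanish. The key structural fact behind condition $({\rm f}_7)$ is that it makes $F$ behave like a concave power $|t|^q$ with $q<2$ near the origin: from $a_4|t|^q\leq f(x,t)t$ one gets a lower bound $F(x,t)\geq (a_4/q)|t|^q$ on $[-\delta,\delta]$, so along any ray $t\mapsto tu$ with $u$ fixed and small, the term $-\int_\Omega F(x,tu)\,dx$ dominates the quadratic $\frac12{\cal E}(tu)$ for small $t>0$; hence $\varphi(tu)<0$ for small $t$ and $u\neq 0$ in a neighborhood of zero. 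The inequality $f(x,t)t\leq qF(x,t)$ is the monotonicity condition: it says $\frac{d}{dt}\bigl(t^{-q}F(x,t)\bigr)\geq 0$ on the positive side (and analogously on the negative side), so that once $\varphi(t_0 u)<0$ one has $\varphi(tu)<0$ (indeed decreasing, roughly) for $t\in(0,t_0]$ — more precisely one uses $\langle\varphi'(tu),tu\rangle \leq q\varphi(tu)$ whenever $|tu|\leq\delta$, which is the standard trick.

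Concretely, first I would record the pointwise consequences of $({\rm f}_7)$: the power lower bound on $F$ just mentioned, and the differential inequality $\langle\varphi'(u),u\rangle\leq q\varphi(u)$ valid whenever $\|u\|_\infty\leq\delta$ (obtained by integrating $f(x,u)u\leq qF(x,u)$ over $\Omega$ and comparing with $2\varphi(u)={\cal E}(u)-2\int_\Omega F$, using $q<2$ to absorb the quadratic part with the right sign). Second, I would prove the key claim: there is $\rho>0$ such that for every $u\in X$ with $0<\|u\|\leq\rho$ there is $t(u)\in(0,1]$ with $\varphi(t(u)u)<0$, and moreover $\varphi(u)\leq 0\Rightarrow \varphi(tu)<0$ for all $t\in(0,1)$ when $\|u\|$ is small enough. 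This uses the power bound for the first part and the differential inequality plus a continuity/implicit-function argument for the second, exactly in the spirit of \cite{Mo} and \cite{MaMoPa}. Because zero is assumed isolated in $K(\varphi)$, one may shrink $\rho$ so that $K(\varphi)\cap \bar B_\rho=\{0\}$.

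Third, with these two ingredients the deformation retraction is built: on $\bar B_\rho$ the set $(\varphi^0\cap \bar B_\rho)\setminus\{0\}$ is contractible, because the negative-gradient-type flow pushes every point down to the boundary sphere along rays where $\varphi<0$, and one constructs an explicit deformation $u\mapsto t u$ combined with the radial retraction onto $\partial B_\rho$; simultaneously $\varphi^0\cap \bar B_\rho$ itself is contractible (contract radially through the origin, which is allowed since on small rays $\varphi\leq 0$). Hence $C_q(\varphi,0)=H_q\bigl(\varphi^0\cap V,\;(\varphi^0\cap V)\setminus\{0\}\bigr)=H_q(\text{contractible},\text{contractible})=0$ for all $q$, by excision and the long exact sequence of the pair. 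Alternatively, and perhaps cleaner to write, one shows directly that $\varphi^0\cap V$ deformation retracts onto $(\varphi^0\cap V)\setminus\{0\}$, which immediately gives the triviality of the relative homology; this is the formulation used in \cite{Mo,MaMoPa} and I would follow it.

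The main obstacle I anticipate is the delicate interplay between the $X=H^1(\Omega)$ norm and the $L^\infty$ smallness required to apply $({\rm f}_7)$: the hypothesis only controls $f$ on $\Omega\times[-\delta,\delta]$, so all the ray-monotonicity arguments are valid only where $\|tu\|_\infty\leq\delta$, whereas the neighborhood $V$ and the ball $B_\rho$ live in the $H^1$ topology, in which $L^\infty$ is not controlled. Handling this cleanly requires either passing to the $C^1(\overline\Omega)$ topology — as done in the proof of Lemma \ref{cqphi} via Corollary 5.1.25 of \cite{ChMNA} and the density of $C^1(\overline\Omega)$ in $X$ — so that critical groups may be computed on $C^1$ balls where $\|u\|_\infty$ is small, or else a truncation of $f$ outside $[-\delta,\delta]$ that does not affect the local structure at zero. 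I would use the $C^1(\overline\Omega)$ route for consistency with the earlier lemma, noting that $\varphi$ has the same critical groups at $0$ computed on $C^1(\overline\Omega)$ as on $X$, and then carry out the retraction argument inside a small $C^1$-ball; the rest is the routine verification sketched above.
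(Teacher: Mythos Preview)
Your overall architecture (ray monotonicity, unique zero-crossing, contractibility of the sublevel pair) is correct and is exactly the template the paper follows. The genuine gap is in your differential inequality. From $({\rm f}_7)$ one has $qF(x,t)-f(x,t)t\geq 0$ on $[-\delta,\delta]$, and a direct computation gives
\[
\langle\varphi'(u),u\rangle - q\varphi(u)=\Bigl(1-\tfrac{q}{2}\Bigr){\cal E}(u)+\int_\Omega\bigl[qF(x,u)-f(x,u)u\bigr]\,dx,
\]
so the sign you obtain is the \emph{opposite} of what you wrote, and moreover only when ${\cal E}(u)\geq 0$. In this paper ${\cal E}$ is genuinely indefinite: the potential $a$ is merely bounded from above, $\hat\lambda_1$ may well be negative, and there are nonzero $u$ with ${\cal E}(u)<0$. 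Hence neither inequality $\langle\varphi'(u),u\rangle\lessgtr q\varphi(u)$ holds on a full neighborhood of the origin, and your ray-monotonicity step collapses for $\varphi$ itself. Passing to the $C^1(\overline\Omega)$ topology does not repair this: it buys you $L^\infty$-smallness so that $({\rm f}_7)$ applies pointwise, but the indefiniteness of ${\cal E}$ persists on $C^1$-balls.

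The paper's fix is precisely to remove the indefiniteness before running the ray argument: it replaces $\varphi$ by the shifted functional $\psi(u)=\varphi(u)+\tfrac{\hat a}{2}\Vert u\Vert_2^2$, shows $C_q(\varphi,0)=C_q(\psi,0)$ by the homotopy device already used in Lemma~\ref{cqphi}, and then exploits Lemma~\ref{hatab} to get ${\cal E}(u)+\hat a\Vert u\Vert_2^2\geq\hat b\Vert u\Vert^2$. With this coercive quadratic the computation above yields $\langle\psi'(u),u\rangle-q\psi(u)\geq(1-\tfrac{q}{2})\hat b\Vert u\Vert^2-c_4\Vert u\Vert_r^r>0$ for small $\Vert u\Vert$, which is the correct monotonicity (at any $u$ with $\psi(u)\geq 0$ the radial derivative is strictly positive). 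A secondary difference: rather than restricting to a $C^1$-ball, the paper uses $({\rm f}_1)$ to globalize the local estimates from $({\rm f}_7)$ as $F(x,t)\geq\frac{a_4}{q}|t|^q-c_4|t|^r$ and $qF(x,t)-f(x,t)t\geq -c_4|t|^r$ on all of $\Omega\times\R$, with $r\in(2,2^*)$; the $|t|^r$ correction is harmless for small $H^1$-norm via Sobolev, so everything is carried out directly in $X$.
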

\begin{proof}
Let $\psi:X\to\R$ be defined by
$$\psi(u):=\frac{1}{2}\left({\cal E}(u)+\hat a\Vert u\Vert^2_2\right)-\int_\Omega F(x,u(x)) dx,\quad u\in X.$$
Obviously, zero turns out to be an isolated critical point of $\psi$, because
$$\Vert\psi'(u)-\varphi'(u)\Vert_{X^*}\leq\hat a\Vert u\Vert_2.$$
Reasoning as in the proof of Lemma \ref{cqphi} we get $C_q(\varphi,0)=C_q(\psi,0)$. Thus, the conclusion is achieved once
\begin{equation*}
C_q(\psi,0)=0,\quad q\in\mathbb{N}_0.
\end{equation*}
Thanks to  $({\rm f}_1)$ and $({\rm f}_7)$, given any $r\in(2,2^*)$, there exists $c_4>0$ fulfilling
\begin{equation}\label{double}
F(x,t)\geq\frac{a_4}{q}|t|^q-c_4|t|^r\;\mbox{ and }\; qF(x,t)-f(x,t)t\geq-c_4|t|^r\;\mbox{ in }\;\Omega\times\R.
\end{equation}
If $u\in (X\setminus\{0\})\cap\psi_0$ then
\begin{eqnarray*}
\frac{d}{d\tau}\left.\psi(\tau u)\right|_{\tau=1}=\langle \psi'(u), u \rangle\geq\langle \psi'(u), u \rangle-q\psi(u)\\
=\left(1-\frac{q}{2}\right)\left({\cal E}(u)+\hat a\Vert u\Vert_2^2\right)+\int_\Omega[qF(x,u)- f(x, u)u]\, dx\\
\geq\left(1-\frac{q}{2}\right)\hat b\Vert u\Vert^2-c_4\Vert u\Vert^r
\end{eqnarray*}
by \eqref{double} besides  Lemma \ref{hatab}. Consequently
$$\frac{d}{d\tau}\left.\psi(\tau u)\right|_{\tau=1}>0$$
whenever $\Vert u\Vert$ is sufficiently small, say $u\in(\bar B_{2\rho}\setminus\{0\})\cap\psi_0$ for some $\rho>0$. Hence, in particular, $\tau_0>0$ and $\tau_0u\in(\bar B_{2\rho}\setminus\{0\})\cap\psi_0$ imply
$$\frac{d}{d\tau}\left.\psi(\tau u)\right|_{\tau=\tau_0}=\frac{1}{\tau_0}\frac{d}{d\tau}\left.\psi(\tau \tau_0u)\right|_{\tau=1} >0.$$
This means that the $C^1$-function $\tau\mapsto\psi(\tau u)$, $\tau\in (0,+\infty)$, turns out to be increasing at the point $\tau$ provided $\tau u\in(\bar B_{2\rho}\setminus\{0\})\cap\psi_0$. So, it vanishes at most once in the open interval $(0,2\Vert u\Vert^{-1}\rho)$. On the other hand, \eqref{double} yields
$$\psi(\tau u)\leq\frac{\tau^2}{2}\left({\cal E}(u)+\hat a\Vert u\Vert_2^2\right)-a_4\frac{\tau^q}{q}\Vert u\Vert_q^q+c_4\tau^r\Vert u\Vert_r^r,$$
whence $\psi(\tau u)<0$ for all $\tau>0$ small enough, since $q<2<r$. Summing up, given any $u\in\bar B_{2\rho}\setminus\{0\}$, either $\psi(\tau u)<0$ as soon as $\tau u\in\bar B_{2\rho}$ or
\begin{equation}\label{btau}
\mbox{$\exists$ a unique $\bar\tau(u)>0$ such that $\bar\tau(u)u\in\bar B_{2\rho}\setminus\{0\}$,
$\psi(\bar\tau(u)u)=0$.}
\end{equation}
Moreover, if $u\in(\bar B_{2\rho}\setminus\{0\})\cap\psi_0$ then $0<\bar\tau(u)\leq 1$ and
\begin{equation*}
\psi(\tau u)<0\;\;\forall\,\tau\in(0,\bar\tau(u)),\quad\psi(\tau u)>0\;\;\forall\,\tau>\bar\tau(u)\;\;\mbox{with}\;\;\tau u\in\bar B_{2\rho}.
\end{equation*}
Let $\tau:\bar B_\rho\setminus\{0\}\to (0,+\infty)$ be defined by
$$\tau(u):=\left\{
\begin{array}{ll}
1 & \mbox{when $u\in(\bar B_\rho\setminus\{0\})\cap\psi^0$,}\\
\bar\tau(u) & \mbox{when $u\in(\bar B_\rho\setminus\{0\})\cap\psi_0$.}
\end{array}
\right.$$
We claim that the function $\tau(u)$ is continuous. This immediately follows once one knows that  $\bar\tau(u)$ turns out to be continuous on $(\bar B_\rho\setminus\{0\})\cap\psi_0$, because, by uniqueness, $u\in\bar B_\rho\setminus\{0\}$ and $\psi(u)=0$ evidently force $\bar\tau(u)=1$; cf. (\ref{btau}). Pick $\hat u\in(\bar B_\rho\setminus\{0\}) \cap\psi_0$. The function $\phi(t,u):=\psi(tu)$ belongs to $C^1(\R\times X)$ and, on account of (\ref{btau}), we have
$$\phi(\bar\tau(\hat u),\hat u)=0,\quad\frac{\partial\phi}{\partial u}(\bar\tau(\hat u),\hat u)
=\bar\tau(\hat u)\psi'(\bar\tau(\hat u)\hat u).$$
Since zero turns out to be an isolated critical point for $\psi$, there is no loss of generality in assuming $K_\varphi\cap \bar B_\rho=\{0\}$. So, the Implicit Function Theorem furnishes $\ep>0$, $\sigma\in C^1(B_\ep (\hat u))$ such that
$$\phi(\sigma(u),u)=0\;\;\forall\, u\in B_\ep(\hat u),\quad\sigma(\hat u)=\bar\tau(\hat u).$$
Through $0<\bar\tau(\hat u)\leq 1$ we thus get $0<\sigma(u)<2$ for all $u\in U$, where $U\subseteq B_\ep(\hat u)$ denotes a convenient  neighborhood of $\hat u$. Consequently,
$$\sigma(u)u\in\bar B_{2\rho}\setminus\{0\}\;\;\mbox{and}\;\;\psi(\sigma(u)u)=0\;\;\mbox{provided}\;\;u\in(\bar B_\rho \setminus\{0\})\cap\psi_0\cap U. $$
By (\ref{btau}) this results in $\sigma(u)=\bar\tau(u)$, from which the continuity of $\bar\tau(u)$ at $\hat u$ follows.  As $\hat u$ was arbitrary, the function $\bar\tau(u)$ turns out to be continuous on $(\bar B_\rho\setminus\{0\})\cap\psi_0$.

Next, observe that $\tau u\in\bar B_{\rho}\cap\psi^0$ for all $\tau\in[0,1]$, $u\in\bar B_{\rho}\cap \psi^0$. Hence, if
$$ h(t, u):=(1-t)u,\quad (t,u)\in [0,1]\times (\bar B_\rho\cap \psi^0)\, $$
then $h([0,1]\times (\bar B_\rho\cap \psi^0))\subseteq\bar B_\rho\cap \psi^0$, namely $\bar B_\rho\cap \psi^0$ is contractible in itself. Moreover, the function
$$g(u):=\tau(u)u\quad\forall\, u\in\bar B_\rho\setminus\{0\}$$
is continuous and one has $g(\bar B_\rho\setminus\{0\})\subseteq (\bar B_\rho\cap\psi^0)\setminus\{0\}$. Since
$$g|_{ (\bar B_\rho\cap \psi^0)\setminus\{0\}}={\rm id}|_{ (\bar B_\rho\cap \psi^0)\setminus\{0\}}\, ,$$
the set $(\bar B_\rho\cap \psi^0)\setminus\{0\}$ turns out to be a retract of $\bar B_\rho\setminus\{0\}$. Being $\bar B_\rho \setminus\{0\}$ contractible in itself because $X$ is infinite dimensional, we get (see, e.g., \cite[p. 389]{GrDu})
$$C_q(\psi, 0):= H_q(\bar B_\rho\cap \psi^0,(\bar B_\rho\cap \psi^0)\setminus\{0\})=0\, ,\quad q\in\mathbb{N}_0\, ,$$
as desired.
\end{proof}
\begin{remark}
This proof is patterned after that of \cite[Theorem 3.1]{MaMoPa}.
\end{remark}
\begin{theorem}\label{thmthree}
Let \eqref{abeta}, $({\rm f}_1)$--$({\rm f}_3)$, and  $({\rm f}_7)$ be satisfied. Then the same conclusion of Theorem \ref{thmone} holds.
\end{theorem}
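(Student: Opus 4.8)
Here is a plan of proof, following the architecture of Theorems~\ref{thmone}--\ref{thmtwo}, the only novelty being that the role played there by the condition at zero (\,$({\rm f}_4)$, $({\rm f}_5)$, $({\rm f}_6)$, or \eqref{useful}\,) is now taken over by the concavity hypothesis $({\rm f}_7)$, which through Lemma~\ref{Cqzero} kills all critical groups of $\varphi$ at the origin.

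\emph{Step 1: the positive solution.} I would first recover $u_0$ exactly as in Theorem~\ref{thmone}. By Proposition~\ref{coercive} the truncated functional $\hat\varphi_+$ is coercive and, by the usual argument (Sobolev embedding plus compactness of the trace), weakly sequentially lower semi-continuous, hence it attains its infimum over $X$ at some $u_0$. To see $u_0\ne0$ I would integrate the left inequality in $({\rm f}_7)$, obtaining $F(x,\xi)\ge (a_4/q)|\xi|^q$ for $|\xi|\le\delta$, so that for $\tau>0$ small one has $\tau\hat u_1\le\delta$ and, using $\hat u_1\in{\rm int}(C_+)$, $\Vert\hat u_1\Vert_2=1$, and ${\cal E}(\hat u_1)=\hat\lambda_1$,
\begin{equation*}
\hat\varphi_+(\tau\hat u_1)\leq\frac{\hat\lambda_1}{2}\,\tau^2-\frac{a_4}{q}\,\tau^q\Vert\hat u_1\Vert_q^q<0
\end{equation*}
because $q<2$; thus $\hat\varphi_+(u_0)<0=\hat\varphi_+(0)$. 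Testing $\hat\varphi_+'(u_0)=0$ against $-u_0^-$ and invoking Lemma~\ref{hatab} yields $u_0\ge0$, whence $u_0$ solves \eqref{prob}. The regularity part of the proof of Lemma~\ref{auxtwo} (which only uses $({\rm f}_1)$ together with $u_0\ge0$) gives $u_0\in C^{1,\alpha}(\overline\Omega)$; moreover $({\rm f}_7)$ forces $f(x,t)\ge0$ on $[0,\delta]$, so $\Delta u_0\le c\,u_0$ a.e. in $\Omega$ for a suitable $c>0$, and the strong maximum principle together with the Boundary Point Lemma give $u_0\in{\rm int}(C_+)$. Since $\varphi|_{C_+}=\hat\varphi_+|_{C_+}$, $u_0$ is a local minimizer of $\varphi$ in $C^1(\overline\Omega)$, hence in $X$ by \cite[Proposition 3]{PaRa}; I may assume it isolated in $K(\varphi)$, otherwise \eqref{prob} has infinitely many solutions and we are done.

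\emph{Step 2: the second solution by the Mountain Pass Theorem.} Here I would proceed verbatim as in Theorem~\ref{thmone}: Proposition~\ref{cerami} gives Condition (C) for $\varphi$; the argument of \cite[Proposition 29]{APS2} furnishes $\rho>0$ with $\varphi(u_0)<c_\rho:=\inf_{\partial B_\rho(u_0)}\varphi$; and \eqref{rminus} yields $\varphi(\tau\hat u_1)\to-\infty$ as $\tau\to-\infty$. The Mountain Pass Theorem then produces $u_1\in X\setminus\{u_0\}$ with $\varphi'(u_1)=0$ and $\varphi(u_1)\ge c_\rho$, and the regularity arguments of Lemma~\ref{auxtwo} give $u_1\in C^1(\overline\Omega)$. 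Assuming, as I may, that $K(\varphi)$ is finite, $u_1$ can be taken of mountain pass type, so that $C_1(\varphi,u_1)\ne0$ by \cite[Proposition 6.100]{MoMoPa}.

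\emph{Step 3: nontriviality of $u_1$.} This is where the proof departs from Theorem~\ref{thmone} and instead mimics Theorem~\ref{thmtwo}: since $0$ is isolated in $K(\varphi)$, Lemma~\ref{Cqzero} applies and gives $C_q(\varphi,0)=0$ for all $q\in\mathbb{N}_0$; in particular $C_1(\varphi,0)=0\ne C_1(\varphi,u_1)$, which forces $u_1\ne0$, and the proof is complete. No step presents a genuine obstacle: the scheme of Theorem~\ref{thmone} is robust, and the only things one must check are that the geometry at the origin ($u_0\ne0$) survives with a concave term (immediate from $q<2$) and that the regularity/maximum-principle step for $u_0$ still goes through under $({\rm f}_7)$ in place of $({\rm f}_4)$, which it does because $({\rm f}_7)$ already supplies $f(x,\cdot)\ge0$ near $0^+$.
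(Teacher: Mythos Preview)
Your proposal is correct and follows essentially the same architecture as the paper's proof: obtain $u_0\in{\rm int}(C_+)$ as a global minimizer of $\hat\varphi_+$ (hence a local minimizer of $\varphi$), produce $u_1$ via the Mountain Pass Theorem using Proposition~\ref{cerami}, and then distinguish $u_1$ from $0$ by comparing $C_1(\varphi,u_1)\neq 0$ with $C_1(\varphi,0)=0$ from Lemma~\ref{Cqzero}. Your Step~1 in fact supplies details that the paper's proof only hints at---namely the explicit inequality $\hat\varphi_+(\tau\hat u_1)<0$ from $q<2$, and the observation that $({\rm f}_7)$ gives $f(x,\cdot)\ge 0$ on $[0,\delta]$ so that the strong maximum principle still applies (the original Lemma~\ref{auxtwo} uses $({\rm f}_4)$ to bound $f(x,t)/t$, which is not available here).
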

\begin{proof}
Reasoning exactly as in the proof of the above-mentioned result yields \eqref{defuzero} for some $u_0\in X$. Furthermore, with $({\rm f}_4)$ replaced by $({\rm f}_7)$, one achieves both $u_0\neq 0$ and $u_0\in{\rm int}(C_+)$; cf. Lemma \ref{auxtwo}. So, $u_0$ turns out to be a local minimizer for $\varphi$, which entails $u_0\in K(\varphi)$. Proposition \ref{cerami} guarantees that $\varphi$ fulfils Condition (C). Thus, the arguments exploited in the proof of Theorem \ref{thmone} provide a second solution $u_1\in C^1(\overline{\Omega})$. Thanks to \cite[Proposition 6.100]{MoMoPa} we have $C_1(\varphi,u_1)\neq 0$. Since $C_1(\varphi,0)=0$ by Lemma \ref{Cqzero}, the function $u_1$ cannot be zero.
\end{proof}
\subsection{Existence of at least three nontrivial solutions}
From now on, we shall suppose that $f(\cdot,0)=0$, $f(x,\cdot)\in C^1(\R)$ for every $x\in\Omega$, and
\begin{itemize}
\item[$({\rm f}'_1)$]  \textit{There exists $a_0\in L^\infty(\Omega)$ satisfying $|f'_t(x,t)|\leq a_0(x)$ in $\Omega\times\R$.}
\end{itemize}
\begin{lemma}\label{cqinfty}
Under \eqref{abeta}, $({\rm f}'_1)$, $({\rm f}_2)$, and $({\rm f}_3)$, one has $C_q(\varphi,\infty)=0$ for all $q\in\mathbb{N}_0$.
\end{lemma}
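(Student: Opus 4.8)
The statement to be established is that $C_q(\varphi,\infty)=0$ for all $q\in\mathbb{N}_0$, under the $C^1$-reaction hypotheses $({\rm f}'_1)$, $({\rm f}_2)$, $({\rm f}_3)$. The overall strategy is a homotopy argument: I will connect $\varphi$ through a $C^1$-path of functionals to a ``purely asymptotic'' functional whose critical groups at infinity are easy to compute, and invoke Proposition~\ref{abstract}. The natural endpoint functional is
$$\varphi_\infty(u):=\frac{1}{2}{\cal E}(u)-\frac{1}{2}\int_\Omega\vartheta(x)(u^-(x))^2dx,$$
where $\vartheta\in L^\infty(\Omega)$ with $\hat\lambda_k\le\vartheta\le\hat\lambda_{k+1}$ captures the left-hand asymptotic slope from $({\rm f}_3)$, since for $u>0$ large $f(x,u)$ is sublinear (governed by $\hat\lambda_1$) and for $u<0$ it behaves like $\vartheta(x)u$. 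Concretely I would take the homotopy $h(t,u):=(1-t)\varphi(u)+t\varphi_\infty(u)$, or if needed a truncated variant, and verify the four conditions $({\rm i}_1)$--$({\rm i}_4)$ of Proposition~\ref{abstract}.

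The verification of $({\rm i}_1)$ (bounded-to-bounded) is immediate from $({\rm f}'_1)$ and the form of ${\cal E}$. Condition $({\rm i}_3)$, that $|h'_t(t,u)|=|\varphi_\infty(u)-\varphi(u)|\le\alpha\Vert u\Vert^p$, follows with $p=2$ because both $\varphi$ and $\varphi_\infty$ have at most quadratic growth under $({\rm f}'_1)$; local Lipschitz continuity in $t$ is trivial since $h$ is affine in $t$. The $C^1$-smoothness of $u\mapsto h'_u(t,u)$ needed in $({\rm i}_4)$ uses $f(x,\cdot)\in C^1$ with $({\rm f}'_1)$. The two genuinely substantive checks are $({\rm i}_2)$ — that $h(0,\cdot)=\varphi$ and $h(1,\cdot)=\varphi_\infty$ satisfy Condition~(C) — and $({\rm i}_4)$, the quantitative coercivity-type estimate $h(t,u)\le a\Rightarrow(1+\Vert u\Vert)|h'_u(t,u)|\ge\delta\Vert u\Vert^2$ for $a,\delta>0$. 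For $\varphi$, Condition~(C) is already Proposition~\ref{cerami} (noting $({\rm f}'_1)\Rightarrow({\rm f}_1)$); for $\varphi_\infty$, and more generally for $h(t,\cdot)$, I would rerun the Claim~1/Claim~2 dichotomy from the proof of Proposition~\ref{cerami} almost verbatim, using Lemma~\ref{splitting} and $({\rm p}_9)$ to rule out the unbounded alternative. Once $({\rm i}_1)$--$({\rm i}_4)$ hold, Proposition~\ref{abstract} gives $C_q(\varphi,\infty)=C_q(\varphi_\infty,\infty)$.

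It then remains to show $C_q(\varphi_\infty,\infty)=0$. Here I would use the direct-sum decomposition $X=\bar H_k\oplus\hat H_{k+1}$ (where by $({\rm f}_3)$ we have $k\ge2$) together with Lemma~\ref{splitting}: on $\bar H_k$ one has ${\cal E}(u)-\int_\Omega\vartheta u^2\le-\bar c\Vert u\Vert^2$, while on $\hat H_{k+1}$ the quadratic form is positive definite after subtracting $\vartheta$; however, $\varphi_\infty$ only subtracts $\vartheta(u^-)^2$, not $\vartheta u^2$, so on the positive cone $\varphi_\infty(u)=\tfrac12{\cal E}(u)\ge0$, whereas going far out in the $-\hat u_1$ direction within $\bar H_k$ drives $\varphi_\infty\to-\infty$ (since $\vartheta\ge\hat\lambda_k\ge\hat\lambda_2>\hat\lambda_1$). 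Thus for $c<0$ with $|c|$ large, the sublevel set $\varphi_\infty^c$ deformation-retracts onto a set that is homotopy equivalent to a subset of the negative cone in a finite-dimensional-plus-one-ray configuration; more cleanly, I expect $\varphi_\infty^c$ to be contractible (one can radially push toward $-s\hat u_1$ for large $s$, staying in $\varphi_\infty^c$ by $({\rm f}_3)$ and convexity of the quadratic part on the complementary directions), which by the long exact sequence of the pair $(X,\varphi_\infty^c)$ and contractibility of $X$ forces $H_q(X,\varphi_\infty^c)=0$ for all $q$. The main obstacle I anticipate is precisely this last homotopy/deformation argument: one must produce an explicit admissible deformation of $X$ onto $\varphi_\infty^c$ (or of $\varphi_\infty^c$ to a point) that respects the sublevel-set constraint, and care is needed because the ``downward'' direction $-\hat u_1$ lies in $\bar H_k$ but interacts with the $u^-$-truncation; handling the interplay of the truncation with the spectral splitting is the delicate point, and one may need to first reduce to an auxiliary functional without truncation (e.g. by a further homotopy, using $({\rm f}_3)$'s sign condition \eqref{rminus}) before computing the groups.
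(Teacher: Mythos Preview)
Your overall strategy—connect $\varphi$ via Proposition~\ref{abstract} to an explicit ``asymptotic'' functional whose critical groups at infinity can be computed—is exactly the one the paper follows. The differences lie in two execution choices, and the second of them is where your proposal has a real gap.

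First, your endpoint $\varphi_\infty(u)=\tfrac12{\cal E}(u)-\tfrac12\int_\Omega\vartheta(u^-)^2$ omits a coercive penalisation of $u^+$. The paper instead takes
\[
h(0,u)=\tfrac12{\cal E}(u)+\tfrac{\hat a}{2}\Vert u^+\Vert_2^2-\tfrac{\mu}{2}\Vert u^-\Vert_2^2
\]
with a \emph{fixed} $\mu\in(\hat\lambda_k,\hat\lambda_{k+1})$. The term $+\hat a\Vert u^+\Vert_2^2$, combined with Lemma~\ref{hatab}, forces $v^+=0$ in the blow-up analysis for every $t<1$ when verifying $({\rm i}_4)$, and the strictly nonresonant $\mu$ immediately gives $K(h(0,\cdot))=\{0\}$. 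Without these features your verification of $({\rm i}_4)$ and of Condition~(C) for $\varphi_\infty$ becomes awkward: recall that $\hat\lambda_1$ may be negative (the paper does not assume $n_0=1$), and your $\vartheta$ may touch an eigenvalue.

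Second, and more seriously, your plan to obtain $C_q(\varphi_\infty,\infty)=0$ by a direct sublevel-set deformation is, as you yourself flag, the ``main obstacle,'' and the contractibility of $\varphi_\infty^c$ is not justified: the truncation $(u^-)^2$ does not respect the spectral splitting $\bar H_k\oplus\hat H_{k+1}$, so Lemma~\ref{splitting} is not directly applicable to it. The paper sidesteps this entirely. Because $K(h(0,\cdot))=\{0\}$, one has $C_q(h(0,\cdot),\infty)=C_q(h(0,\cdot),0)$, and the latter is annihilated by a \emph{second} homotopy $\hat h(t,u):=h(0,u)+t\int_\Omega u\,dx$. One checks $\hat h'_u(t,u)\neq 0$ on $[0,1]\times(X\setminus\{0\})$: any zero would satisfy $u\leq 0$, whence $u\in-{\rm int}(C_+)$ by regularity, and then Picone's identity applied with $v=\hat u_1$ yields the contradiction $0<\hat\lambda_1-\mu<0$. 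Homotopy invariance of singular homology then gives $C_q(h(0,\cdot),0)=0$. This linear-perturbation-plus-Picone device is the idea you are missing.
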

\begin{proof}
Pick $\mu\in(\hat\lambda_k,\hat\lambda_{k+1})$. Define, provided $(t,u)\in [0,1]\times X$,
$$h(t,u):=\frac{1}{2}{\cal E}(u)-t\int_\Omega F(x,u)dx+\frac{1-t}{2}\left(\hat a\Vert u^+\Vert_2^2-\mu\Vert u^-\Vert_2^2 \right).$$
Clearly, $h$ maps bounded sets into bounded sets. On account of Proposition \ref{cerami}, both $h(0,\cdot)$ and $h(1,\cdot)$ satisfy Condition (C). Due to $({\rm f}'_1)$, the functionals $t\mapsto h'_t(t,u)$ and $u\mapsto h'_u(t,u)$ are locally Lipschitz continuous. Let us next verify that 
\begin{equation}\label{furtherclaim}
\exists\, a\in\R,\, \delta>0\;\mbox{ fulfilling }\; h(t,u)\leq a\;\implies\; (1+\Vert u\Vert)\Vert h'_u(t,u)\Vert_{X^*}
\geq\delta\Vert u\Vert^2.
\end{equation}
If the assertion were false, then we might find two sequences $\{t_n\}\subseteq [0,1]$, $\{u_n\}\subseteq X$ with the properties below:
\begin{eqnarray}\label{tnun}
t_n\to t,\quad\Vert u_n\Vert\to+\infty,\quad h(t_n,u_n)\to-\infty,\nonumber\\
\phantom{}\\
 (1+\Vert u_n\Vert)\Vert h'_u(t_n,u_n)\Vert_{X^*}<\frac{1}{n}\Vert u_n\Vert^2\quad\forall\, n\in\mathbb{N}.\nonumber
\end{eqnarray}
Put $v_n:=\Vert u_n\Vert^{-1}u_n$. Reasoning as in the proof of Lemma \ref{cqphi} produces $v\in X$ such that $v_n\to v$ and
\begin{equation}\label{long}
\int_\Omega\left[\nabla v\cdot\nabla w+avw+(1-t)\hat av^+w\right]dx+\int_{\partial\Omega}\beta vwd\sigma=\int_\Omega\left[ t\theta v^+-\eta_t v^-\right]wdx
\end{equation} 
for all $w\in X$, where $\eta_t(x):=t\eta(x)+(1-t)\mu$ while $\theta,\eta\in L^\infty(\Omega)$ comply with $\theta\leq\hat \lambda_1$, $\hat\lambda_k\leq\eta\leq\hat\lambda_{k+1}$. Hence, $v$ is a weak solution to the problem
$$-\Delta u+a(x)u+(1-t)\hat au^+=t\theta(x)u^+-\eta_t(x)u^-\quad\mbox{in $\Omega$,}\quad\frac{\partial u}{\partial n}+\beta(x)u=0\quad\mbox{on $\partial\Omega$.}$$
If $t=1$ then \eqref{long} written for $w:=v^+$ entails
$${\cal E}(v^+)=\int_\Omega\theta(x)v^+(x)^2dx.$$
Two situations may now occur:\\
1) $\theta\not\equiv\hat\lambda_1$. Lemma \ref{splitting} immediately forces $v^+=0$. Consequently,
$$-\Delta v+a(x)v=\eta(x)v\quad\mbox{in $\Omega$,}\quad\frac{\partial v}{\partial n}+\beta(x)v=0\quad\mbox{on $\partial\Omega$.}$$
Since $v\neq 0$, because $\Vert v\Vert=1$, and $\hat\lambda_k\leq\eta\leq\hat\lambda_{k+1}$, through $({\rm p}_9)$ we see that $v$ must change sign, which is absurd.\\
2) $\theta=\hat\lambda_1$. Likewise the proof of Proposition \ref{coercive}, \eqref{tnun} give rise to a contradiction.\\
Therefore, $t<1$. Letting $w:=v^+$ in \eqref{long} yields
$$(1-t){\cal E}(v^+)+(1-t)\hat a\Vert v^+\Vert_2^2=t\left[\int_\Omega\theta(v^+)^2dx-{\cal E}(v^+)\right].$$
From Lemmas \ref{hatab}--\ref{splitting} it thus follows $(1-t)\hat b\Vert v^+\Vert^2\leq 0$, whence $v=-v^-$. Now, \eqref{long}
becomes
$$-\Delta v+a(x)v=\eta_t(x)v\quad\mbox{in $\Omega$,}\quad\frac{\partial v}{\partial n}+\beta(x)v=0\quad\mbox{on $\partial\Omega$,}$$
and, as before, $v$ has to be nodal, since $\hat\lambda_k\leq\eta_t\leq\hat\lambda_{k+1}$ by the choice of $\mu$. However, this is impossible. Thus, \eqref{furtherclaim} holds true. Via Proposition \ref{abstract} we obtain
\begin{equation}\label{cqhinfty}
C_q(\varphi,\infty)=C_q(h(1,\cdot),\infty)=C_q(h(0,\cdot),\infty)\quad\forall\, q\in\mathbb{N}_0.
\end{equation}
Observe next that
\begin{equation}\label{cqhzero}
C_q(h(0,\cdot),\infty)=C_q(h(0,\cdot),0).
\end{equation}
In fact, if $u\in K(h(0,\cdot))$ then
\begin{equation}\label{short}
\int_\Omega\left(\nabla u\cdot\nabla v+auv+\hat au^+v\right)dx+\int_{\partial\Omega}\beta uvd\sigma=-\mu\int_\Omega
u^-vdx,\quad v\in X.
\end{equation} 
Choosing $v:=u^+$ furnishes ${\cal E}(u^+)+\hat a\Vert u^+\Vert_2^2=0$, namely $u^+=0$; cf. Lemma \ref{hatab}. So, \eqref{short} actually means
$$-\Delta u+a(x)u=\mu u\quad\mbox{in $\Omega$,}\quad\frac{\partial u}{\partial n}+\beta(x)u=0\quad\mbox{on $\partial\Omega$,}$$
and, a fortiori, $u=0$, because $\hat\lambda_k<\mu<\hat\lambda_{k+1}$. This shows that $ K(h(0,\cdot))=\{0\}$, from which \eqref{cqhzero} follows at once.\\
Let us finally compute $C_q(h(0,\cdot),0)$. Consider the homotopy
$$\hat h(t,u):=h(0,u)+t\int_\Omega u(x)dx\quad\forall\, (t,u)\in [0,1]\times X.$$
We claim that
\begin{equation}\label{claim2}
\hat h'_u(t,u)\neq 0,\quad (t,u)\in[0,1]\times(X\setminus\{0\}).
\end{equation}
By contradiction, suppose there exists $(t,u)\in (0,1]\times(X\setminus\{0\})$ fulfilling $h'_u(t,u)=0$. The same arguments exploited above produce here $u\leq 0$ and
$$-\Delta u+a(x)u=\mu u-t\quad\mbox{in $\Omega$,}\quad\frac{\partial u}{\partial n}+\beta(x)u=0\quad\mbox{on $\partial\Omega$.}$$
Hence, likewise the proof of Lemma \ref{auxtwo}, $u\in-{\rm int}(C_+)$. Define, for every $v\in{\rm int}(C_+)$,
$$R(v,-u):=|\nabla v|^2-\nabla(-u)\cdot\nabla\left(\frac{v^2}{-u}\right).$$
Using Picone's identity \cite[Proposition 9.61]{MoMoPa} yields
\begin{eqnarray*}
0\leq\int_\Omega R(v,-u)(x)dx=\Vert\nabla v\Vert_2^2-\int_\Omega(-\Delta u)\frac{v^2}{u}dx-\int_{\partial\Omega}\beta u\frac{v^2}{-u}d\sigma\\
=\Vert\nabla v\Vert_2^2+\int_\Omega av^2dx+\int_{\partial\Omega}\beta v^2d\sigma-\mu\int_\Omega v^2dx +t\int_\Omega\frac{v^2}{u}dx\\
<\Vert\nabla v\Vert_2^2+\int_\Omega av^2dx+\int_{\partial\Omega}\beta v^2d\sigma-\mu\int_\Omega v^2dx.
\end{eqnarray*}
On account of $({\rm p}_6)$ this entails, for $v:=\hat u_1$, 
$$0<\hat\lambda_1-\mu<0,$$
which is clearly absurd. Thanks to \eqref{claim2} and Theorem 5.1.2 in \cite{ChMNA} we have, for $\rho>0$ small enough,
\begin{equation}\label{hathone}
H_q(\hat h(1,\cdot)^0\cap B_\rho,\hat h(1,\cdot)^0\cap B_\rho\setminus\{0\})=0
\end{equation}
while the homotopy invariance of singular homology forces
\begin{equation}\label{hathzero}
H_q(\hat h(0,\cdot)^0\cap B_\rho,\hat h(0,\cdot)^0\cap B_\rho\setminus\{0\})=
H_q(\hat h(1,\cdot)^0\cap B_\rho,\hat h(1,\cdot)^0\cap B_\rho\setminus\{0\}).
\end{equation}
Since $\hat h(0,\cdot)=h(0,\cdot)$, \eqref{hathone}--\eqref{hathzero} provide
\begin{equation}\label{h00}
C_q(h(0,\cdot),0)=0\quad\forall\, q\in\mathbb{N}_0.
\end{equation}
Gathering \eqref{cqhinfty}, \eqref{cqhzero}, and \eqref{h00} together yields the conclusion.
\end{proof}
\begin{theorem}\label{thmfour}
Let \eqref{abeta}, $({\rm f}'_1)$, $({\rm f}_2)$, $({\rm f}_3)$, and $({\rm f}_6)$ be satisfied. Then Problem \eqref{prob} admits at least three nontrivial solutions $u_0\in{\rm int}(C_+)$, $u_1,u_2\in C^1(\overline{\Omega})$.
\end{theorem}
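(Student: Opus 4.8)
The plan is to combine the two solutions already produced by the Theorem \ref{thmone}-type argument with a third one obtained via Morse theory, counting critical points through the Morse relation \eqref{morse}. First I would run the variational scheme of Theorem \ref{thmtwo}: under $({\rm f}'_1)$, $({\rm f}_2)$, $({\rm f}_3)$ the truncated functional $\hat\varphi_+$ is coercive and weakly sequentially lower semi-continuous (Propositions \ref{coercive} and \ref{cerami}), so it has a global minimizer $u_0$; assumption $({\rm f}_6)$ — note $\hat\lambda_m\le f'_t(\cdot,0)$ with $m\ge 2$ forces a negative eigenvalue direction, but more importantly the geometry near $0$ together with the sign argument via Lemma \ref{hatab} and $({\rm p}_6)$ — gives $u_0\neq 0$, $u_0\ge 0$, and then Lemma \ref{auxtwo} yields $u_0\in{\rm int}(C_+)$. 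As in Theorem \ref{thmone}, $u_0$ is a local minimizer of $\varphi$ (using $\varphi|_{C_+}=\hat\varphi_+|_{C_+}$ and \cite[Proposition 3]{PaRa}), so $C_q(\varphi,u_0)=\delta_{q,0}\mathbb{Z}$, and the Mountain Pass Theorem (legitimate since $\varphi$ satisfies Condition (C) and $\varphi(\tau\hat u_1)\to-\infty$ as $\tau\to-\infty$ by \eqref{rminus}) produces $u_1\neq u_0$ with $C_1(\varphi,u_1)\neq 0$; since Lemma \ref{cqphi} gives $C_q(\varphi,0)=\delta_{q,d_m}\mathbb{Z}$ with $d_m\ge 2$, we get $u_1\neq 0$, and regularity arguments as in Lemma \ref{auxtwo} put $u_1\in C^1(\overline\Omega)$.

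The core new ingredient is the existence of a \emph{third} nontrivial solution $u_2$. Here I would argue by contradiction: assume $K(\varphi)=\{0,u_0,u_1\}$, all isolated, so the Morse relation \eqref{morse} applies with exactly three summands on the left. The critical groups we have in hand are $C_q(\varphi,0)=\delta_{q,d_m}\mathbb{Z}$ (Lemma \ref{cqphi}, using $({\rm f}_6)$) and $C_q(\varphi,u_0)=\delta_{q,0}\mathbb{Z}$ (local minimizer). For $u_1$ I would invoke the standard mountain-pass characterization of its critical groups: since $u_1$ is a mountain-pass point and $C^1$-regular while $C^1(\overline\Omega)$ is densely embedded in $X$, a result of the type \cite[Proposition 6.100]{MoMoPa} gives $C_1(\varphi,u_1)\neq 0$; as $u_1$ is isolated one actually has $C_q(\varphi,u_1)=\delta_{q,1}\mathbb{Z}$ (here one may need that the mountain-pass level is not a minimum, which $C_0(\varphi,u_0)\neq 0$ together with $u_1\ne u_0$ guarantees in the relevant range). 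Finally, the behavior at infinity is controlled by Lemma \ref{cqinfty}: $C_q(\varphi,\infty)=0$ for every $q$, so $P(t,\infty)\equiv 0$. Plugging everything into \eqref{morse},
\begin{equation*}
t^{d_m}+t+1=(1+t)Q(t),
\end{equation*}
and evaluating at $t=-1$ gives $(-1)^{d_m}-1+1=0$, i.e. $(-1)^{d_m}=0$, a contradiction. (One checks the polynomial $t^{d_m}+t+1$ is not divisible by $1+t$ since its value at $-1$ equals $(-1)^{d_m}\ne 0$.) Hence $K(\varphi)$ must contain a fourth element $u_2\notin\{0,u_0,u_1\}$, which solves \eqref{prob}, is nontrivial, and lies in $C^1(\overline\Omega)$ by the usual regularity bootstrap.

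I expect the main obstacle to be the precise computation of the critical groups of the mountain-pass solution $u_1$ — specifically, ruling out the degenerate possibility that $u_1$ coincides with $u_0$ in homological terms or that $C_q(\varphi,u_1)$ has rank in degree $0$ — since $\varphi$ is only $C^1$ (under $({\rm f}'_1)$ it is not $C^2$ globally), so the clean non-degeneracy formula \eqref{kd} is unavailable and one must rely on the mountain-pass critical-group machinery together with the strict separation $\varphi(u_0)<c_\rho\le\varphi(u_1)$ from \eqref{crho}. A secondary technical point is ensuring that $d_m\ne 1$ (guaranteed since $m\ge\max\{n_0,2\}\ge 2$ forces $d_m=\dim\bar H_m\ge 2$) so that the three known critical groups are concentrated in the distinct degrees $0$, $1$, $d_m$, which is exactly what makes the Morse-relation contradiction work; if by accident $d_m=1$ the argument would collapse, but the hypotheses preclude this. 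Everything else — coercivity, Condition (C), the sign and regularity arguments — is a routine transcription of the proofs of Theorems \ref{thmone}--\ref{thmtwo}.
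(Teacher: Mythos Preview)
Your overall strategy coincides with the paper's: obtain $u_0$ and $u_1$ exactly as in Theorems \ref{thmone}--\ref{thmtwo}, record $C_q(\varphi,u_0)=\delta_{q,0}\mathbb{Z}$, $C_q(\varphi,0)=\delta_{q,d_m}\mathbb{Z}$ (Lemma \ref{cqphi}), $C_q(\varphi,\infty)=0$ (Lemma \ref{cqinfty}), and then derive a contradiction from the Morse relation \eqref{morse} at $t=-1$ under the assumption $K(\varphi)=\{0,u_0,u_1\}$.

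The genuine gap is precisely the step you flag as ``the main obstacle'', and your proposed way around it does not work. From the mountain-pass nature of $u_1$ one only gets $C_1(\varphi,u_1)\neq 0$; the assertion ``as $u_1$ is isolated one actually has $C_q(\varphi,u_1)=\delta_{q,1}\mathbb{Z}$'' is unjustified in general, and the strict inequality \eqref{crho} does not by itself force the higher critical groups to vanish. Without the full computation $C_q(\varphi,u_1)=\delta_{q,1}\mathbb{Z}$, the Morse relation at $t=-1$ gives no contradiction: one would only obtain $M(-1,u_1)=-1-(-1)^{d_m}$, which is perfectly consistent (e.g.\ if $d_m$ is odd).

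More importantly, you mis-read the regularity available. Under the standing hypotheses of Section~3.2 one has $f(x,\cdot)\in C^1(\R)$ and $({\rm f}'_1)$ bounds $|f'_t(x,t)|$ uniformly, so $\varphi\in C^2(X)$ --- this is exactly what the paper uses. With $\varphi''(u_1)$ at hand the paper shows $\dim\ker\varphi''(u_1)\le 1$: if the Morse index of $u_1$ is zero then \eqref{2der} yields the inequality ${\cal E}(v)\ge\int_\Omega f'_t(x,u_1)v^2\,dx$ for all $v$, and a short eigenvalue argument (splitting on whether $(f'_t(\cdot,u_1)-a)^+\equiv 0$ or not, and using simplicity of the first weighted eigenvalue) forces the kernel to be at most one-dimensional. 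Then Corollary~6.102 of \cite{MoMoPa} (a Gromoll--Meyer/shifting-type statement: $C_1\neq 0$ together with nullity $\le 1$ implies $C_q=\delta_{q,1}\mathbb{Z}$) gives the needed computation. Once this is in place, your Morse-relation contradiction $(-1)^{d_m}+1-1=0$ goes through verbatim.
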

\begin{proof}
The same arguments adopted in the proofs of Theorems \ref{thmone}--\ref{thmtwo} give $u_0$ and $u_1$. Moreover,
\begin{equation}\label{cquzero}
C_q(\varphi,u_0)=\delta_{q,0}\mathbb{Z}\quad\forall\, q\in\mathbb{N}_0,
\end{equation}
because $u_0$ is a local minimizer for $\varphi$, while $u_1$ turns out to be a mountain pass type critical point of $\varphi$. Observe next that $\varphi\in C^2(X)$ and one has
\begin{equation}\label{2der}
\langle\varphi''(u_1)(v),w\rangle=\int_\Omega\left[\nabla v\cdot\nabla w+avw-f_t'(x,u_1)vw\right]dx+\int_{\partial\Omega}\beta vwd\sigma,\quad v,w\in X.
\end{equation}
If the Morse index of $u_1$ is zero then, by \eqref{2der},
\begin{equation}\label{finineq}
\Vert\nabla v\Vert_2^2+\int_{\partial\Omega}\beta v^2d\sigma\geq\int_\Omega\left[f_t'(x,u_1)-a\right]v^2dx\quad\mbox{in}\quad X.
\end{equation}
Two situations may now occur.\\
1)  $(f_t'(\cdot,u_1)-a)^+=0$. Given $u\in{\rm ker}(\varphi''(u_1))$, from \eqref{2der} we immediately infer
$$\Vert\nabla u\Vert_2^2+\int_{\partial\Omega}\beta u^2d\sigma\leq 0,$$
whence, on account of \eqref{abeta}, the function $u$ must be constant.\\
2) $(f_t'(\cdot,u_1)-a)^+\not\equiv 0$. Inequality \eqref{finineq} entails $\hat\lambda_1(\alpha)\geq 1$, where $\alpha:=(f_t'(\cdot,u_1)-a)$. So, due to \eqref{2der}, $\hat\lambda_1(\alpha)=1$ as soon as $u\in{\rm ker}(\varphi''(u_1))\setminus\{0\}$.\\
Consequently, in either case, ${\rm dim}({\rm ker}(\varphi''(u_1)))\leq 1$, and Corollary 6.102 of \cite{MoMoPa} yields
\begin{equation}\label{cquone}
C_q(\varphi,u_1)=\delta_{q,1}\mathbb{Z}\quad\forall\, q\in\mathbb{N}_0.
\end{equation}
 Finally, if $K(\varphi)=\{0,u_0,u_1\}$ then the Morse relation written for $t=-1$, Lemma \ref{cqphi}, \eqref{cquzero}, \eqref{cquone}, besides Lemma \ref{cqinfty} would imply
$$(-1)^{d_m}+(-1)^0+(-1)^1=0,$$
which is impossible. Thus, there exists $u_1\in K(\varphi)\setminus\{0,u_0,u_1\}$, i.e., a third nontrivial solution to \eqref{prob}. Standard regularity arguments (see the proof of Lemma \ref{auxtwo}) ensure that $u_1\in C^1(\overline{\Omega})$.
\end{proof}
\begin{example}
Let $k>\max\{n_0,2\}$ and let $b\in[\hat\lambda_k,\hat\lambda_{k+1}]$. The function $f:\Omega\times\R\to\R$ defined by, for every $(x,t)\in\Omega\times\R$,
$$f(x,t):=\left\{
\begin{array}{ll}
bt & \mbox{if } t\leq 1,\\
\hat\lambda_1t-\sqrt{t}+c\log t+d & \mbox{otherwise},\\
\end{array}
\right.$$
where $c:=b-\hat\lambda_1+2^{-1}$ and $d:=b-\hat\lambda_1+1$ satisfies all the assumptions of Theorem \ref{thmfour}.
\end{example}
\section*{Acknowledgement}
Work performed under the auspices of GNAMPA of INDAM.
\end{document}